\newcommand{\df}{:=} 
\newcommand{\gr}{\bullet} 
\newcommand{\incl}{\hookrightarrow} 
\newcommand{\iso}{\simeq} 
\newcommand{\To}{\longrightarrow} 
\newcommand{\ieme}{\mathrm{th}} 
\newcommand{\triv}{\mathbf{1}} 
\newcommand{\sg}{\mathrm{sgn}} 
\newcommand{\el}{\mathbf{a}} 
\newcommand{\Aff}{\mathbf{A}} 
\newcommand{\dd}{\mathbf{d}} 
\newcommand{\CC}{\mathbf{C}} 
\newcommand{\HH}{\mathcal{H}} 
\newcommand{\NN}{\mathcal{N}} 
\newcommand{\ZZ}{\mathbf{Z}} 
\newcommand{\PP}{\mathbf{P}} 
\newcommand{\VV}{\mathcal{V}} 
\newcommand{\XX}{\mathcal{X}} 
\newcommand{\A}{\operatorname{A}} 
\newcommand{\der}{\mathrm{d}} 
\newcommand{\GL}{\operatorname{GL}} 
\newcommand{\h}{\operatorname{H}} 
\newcommand{\td}{\operatorname{td}} 
\newcommand{\ch}{\operatorname{ch}} 
\newcommand{\ct}{\operatorname{ct}} 
\newcommand{\Ker}{\operatorname{Ker}} 
\newcommand{\M}{\operatorname{M}} 
\newcommand{\PGL}{\operatorname{PGL}} 
\newcommand{\tr}{\operatorname{tr}} 
\newcommand{\Tt}{\mathcal{T}} 
\newcommand{\prim}{\mathrm{pr}} 
\newtheorem{theorem}{Theorem}[section]
\newtheorem{proposition}[theorem]{Proposition}
\newtheorem{lemma}[theorem]{Lemma}
\newtheorem{corollary}[theorem]{Corollary}
\begin{document}

\title[Cohomology of symmetric hypersurfaces]{Representations on the cohomology of smooth projective hypersurfaces with symmetries}

\author{Gabriel Ch\^enevert}

\address{Math. Instituut \\ Universiteit Leiden \\ Postbus 9512 \\ 2300 RA Leiden \\ Nederland}
\curraddr{IS\'EN (Universit\'e Catholique de Lille) \\ 41 Vauban \\ 59046 Lille Cedex \\ France}
\email{gabriel.chenevert@isen.fr}

\subjclass[2000]{Primary 14Q10, 19L10, 20C30}

\begin{abstract} This paper is concerned with the primitive cohomology of a smooth projective hypersurface considered as a linear representation for its automorphism group. Using the Lefschetz-Riemann-Roch formula, the character of this representation is described on each piece of the Hodge decomposition. A consequence about the existence of smooth symmetric hypersurfaces that are stable under the standard irreducible permutation representation of the symmetric group on homogeneous coordinates is drawn. \end{abstract}

\maketitle

If $X$ is a topological space admitting an action (on the right) by a finite group $G$ of automorphisms, then its cohomology $\h^\gr(X,\CC)$ naturally inherits the structure of a graded linear representation for $G$, whose (graded) character can in principle be described using the topological Lefschetz fixed point formula in terms of intersection multiplicities \cite{Nakaoka}. When $X$ is a smooth projective complex manifold and $G$ acts holomorphically, the induced action on the cohomology of $X$ preserves the Hodge decomposition
$$ \h^k(X,\CC) = \bigoplus_{p + q = k} \h^{p,q}(X, \CC). $$
The Dolbeault isomorphism interprets $ \h^{p,q}(X,\CC)$ as the coherent sheaf cohomology group $\h^q(X,\Omega_X^p)$, hence the Lefschetz-Riemann-Roch formula \cite{Donovan} may be used to describe the $p$-part of the representation. We follow this approach in the first section to derive a generating series expression for the character of the $p$-part provided the normal bundles of the loci of fixed points may be expressed as ``polynomials'' in hyperplane line bundles (Theorem \ref{main_formula}).

In the case of a smooth hypersurface, the Lefschetz hyperplane theorem guarantees that all the cohomology of $X$ is inherited from that of the ambient projective space except for the \emph{primitive} part that lives in middle degree. Accordingly, any finite group $G$ of projective transformations preserving $X$ acts trivially on non-primitive cohomology, and the machinery of the first section may be used to describe rather explicitly its linear representation on the Hodge pieces of the primitive part (Theorem \ref{th_main}). In particular, the character on the whole primitive cohomology may be described as follows: if $X$ has degree $d$ and dimension $n$, then the trace of the linear map induced by $\sigma \in G$ on the primitive cohomology of $X$ is given by
$$ \tr(\sigma^*, \h^n_\prim(X)) = \frac{(-1)^n}{d} \sum_{\alpha^d = 1} (1 - d)^{m_\alpha}, $$
where $m_\alpha$ denotes the multiplicity of $\alpha$ as an eigenvalue of a suitable linear representative $\widetilde{\sigma}$ of $\sigma$ (Corollary \ref{cor_nice}).

Consider the standard \emph{irreducible} permutation representation of the symmetric group $S_{n+3}$ on the homogeneous coordinates of $\PP^{n+1}$. We derive from the previous formula an elegant proof of the following fact (Theorem \ref{th_existence}):
\begin{quote} For $n \geq 1$ and $d \geq 2$, there exists a \emph{smooth} projective hypersurface of dimension $n$ and degree $d$ stable under the action of $S_{n+3}$ if and only if $n+3$ is coprime with $d-1$. \end{quote}

Even though such hypersurfaces have already received some attention (e.g. \cite{Shreve}), this simple condition for their existence does not seem to have been noticed before.

The third section is devoted to a proof of the underlying representation-theoretic statement (Theorem \ref{coprime}): if $m(\sigma)$ denotes the number of cycles in the disjoint cycle decomposition of a permutation $\sigma \in S_n$, and $\ell$ is a positive integer, then the class function $\sigma \mapsto \ell^{m(\sigma) - 1}$ is a character of $S_n$ if and only if $(n, \ell) = 1$.

For simplicity of exposition, all manifolds herein will be considered complex. However, most of the results are readily extended to the case of smooth algebraic varieties over other algebraically closed fields, provided Brauer traces are used in positive characteristic (see \cite{Donovan}). The results presented here greatly simplify, as well as expand on, part of the Ph.D. thesis \cite{thesis} of the author, who wishes to express his gratitude to E. Goren, F. Bergeron, S. Wang, B. Edixhoven, and the reviewer for their valuable suggestions and comments.

\section{Holomorphic Lefschetz formulae for complete intersections}

In this section we first recall the formal framework in which the Lesfchetz-Riemann-Roch formula (Theorem \ref{th_LRR}) may be stated, and from it derive a generating series expression for the traces of the morphisms induced by an automorphism of a smooth complete intersection satisfying certain properties on the pieces of the Hodge decomposition (Corollary \ref{cor_LRR}).

\subsection*{Vector bundles on pairs}

Consider an automorphism $\sigma$ of finite order of a smooth projective variety $X$; the couple $\XX = (X, \sigma)$ will be called a \emph{smooth projective pair}. By definition, a \emph{vector bundle} on $\XX$ is a pair $\xi = (\VV, \phi)$, where $\VV$ is a vector bundle on $X$ and $\phi: \sigma^* \VV \to \VV$ is a vector bundle morphism above $\sigma$. We call \emph{Lefschetz characteristic of $(\XX, \xi)$} the trace (in the graded sense) of the endomorphism induced by $\sigma$ and $\phi$ on the cohomology of $X$ with coefficients in $\VV$. In symbols,
$$ \chi(\XX, \xi) \df \sum_{i=0}^n (-1)^i \tr \big( (\sigma, \phi)^*, \h^i(X, \VV) \big), $$
where $n$ is the (complex) dimension of $X$ and $(\sigma, \phi)^*$ denotes the composite map
$$ \h^\gr(X, \VV) \stackrel{\sigma^*}{\To} \h^\gr(X, \sigma^* \VV) \stackrel{\phi}{\To} \h^\gr(X,\VV). $$

Let $\M(\XX)$ be the ring of \emph{virtual vector bundles} on $\XX$, i.e., the Grothendieck group of the category of vector bundles on $\XX$ endowed with the ring structure coming from the tensor product. Since short exact sequences of bundles induce long exact sequences in cohomology, the Lefschetz characteristic may be viewed as an additive map
$$ \chi(\XX, \, \cdot \,): \M(\XX) \To \CC. $$

If $\Omega^p_X$ denotes the vector bundle whose sections are the regular $p$-forms on $X$, the automorphism $\sigma \! : X \to X$ induces by pullback a map $\der^p\sigma \! : \sigma^* \Omega^p_X \to \Omega^p_X$. By abuse of notation, we will also use $\Omega^p_X$ to denote the pair $(\Omega^p_X, \der^p \sigma)$ considered as an element of $\M(\XX)$. Following conventional usage, the $p$-characteristic and $y$-polynomial of an element $\xi \in \M(\XX)$ are defined as
\begin{equation} \label{y-char}
\chi^p(\XX, \xi) \df \chi(\XX, \Omega^p_X \cdot \xi), \qquad \chi_y(\XX, \xi) \df \sum_{p = 1}^n \chi^p(\XX, \xi) \, y^p \in \CC[y].
\end{equation}

If $Z$ is a subvariety of $X$ that is stable under the action of $\sigma$, then its normal bundle $\NN_{X/Z}$, defined by the short exact sequence
\begin{equation} \label{normal}
0 \To T_* Z \To (T_* X) |_Z \To \NN_{X/Z} \To 0,
\end{equation}
also inherits a natural action coming from the differential of $\sigma$. We will again abuse notation and denote $\NN_{X/Z}$ the corresponding element of $\M(\mathcal{Z})$, where $\mathcal{Z}$ stands for the projective pair $(Z, \sigma|_Z)$.

\subsection*{The Lefschetz-Riemann-Roch formula}

For a smooth projective pair with trivial automorphism $\XX = (X, 1_X)$, we convene to write $\M(X)$ instead of $\M(\XX)$. The \emph{Chern trace} is a ring homomorphism
$$ \ct: \M(X) \To \CC \otimes \A(X) $$
with values in the complex-valued Chow ring of $X$. Its value on a pair $(\VV, \phi)$ can be expressed in terms of the Chern character as
$$ \ct(\VV, \phi) = \sum_{\alpha \in \CC} \alpha \ch(\VV_\alpha) $$
if $\VV = \oplus_\alpha \VV_\alpha$ is the decomposition of $\VV$ in generalized eigenspaces for $\phi$ (see \cite[\S 3]{Donovan}).

Also, the \emph{Todd characteristic} of a virtual bundle $\xi \in \M(X)$ is defined as
$$ \Tt(X, \xi) \df \int_X \td(X) \ct(\xi), $$
where the integration sign means taking the intersection product with the characteristic class of $X$.

\begin{theorem}[Lefschetz-Riemann-Roch formula] \label{th_LRR}

For every smooth projective pair $\XX = (X, \sigma)$, the fixed locus $X_\sigma$ of $\sigma$ is a smooth algebraic subset of $X$, its virtual conormal bundle
\begin{equation} \label{lambda_N}
\lambda(\NN_{X/X_\sigma}) \df \sum_i (-1)^i \wedge^i \! \NN_{X/X_\sigma}^{\ \vee}
\end{equation}
is invertible in $\M(X_\sigma)$, and we have
\begin{equation} \label{LRR}
\chi(\XX,\xi) = \Tt \big( X_\sigma, \lambda(\NN_{X/X_\sigma})^{-1} \cdot \xi|_{X_\sigma} \big).
\end{equation}

\end{theorem}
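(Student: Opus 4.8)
The plan is to dispatch the two preliminary assertions by linearizing the action at the fixed points, and then to derive the identity~\eqref{LRR} by a concentration principle that reduces the global Lefschetz characteristic to a computation on the fixed locus, where the induced automorphism is trivial on the base and Hirzebruch--Riemann--Roch applies directly. First I would show that $X_\sigma$ is smooth: since $\sigma$ has finite order, averaging a local chart over the cyclic group $\langle \sigma \rangle$ produces, near any fixed point $x$, $\sigma$-invariant analytic coordinates in which $\sigma$ acts on $T_x X$ as a finite-order linear map. The fixed locus is then cut out locally by the coordinates on which $\sigma$ acts nontrivially, so it is smooth with tangent space the eigenvalue-$1$ subspace $(T_x X)^\sigma$. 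Consequently the normal bundle of~\eqref{normal} carries no eigenvalue $1$ and splits $\sigma$-equivariantly as $\NN_{X/X_\sigma} = \bigoplus_{\alpha \neq 1} \NN_\alpha$.

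This eigenvalue decomposition immediately yields the invertibility of $\lambda(\NN_{X/X_\sigma})$. Applying $\ct$ and the splitting principle, $\ct(\lambda(\NN_{X/X_\sigma}))$ becomes a product of factors $1 - \alpha_j^{-1} e^{-x_j}$ over the Chern roots $x_j$ of the eigen-subbundles $\NN_\alpha$, whose degree-zero part $\prod_j (1 - \alpha_j^{-1})$ is nonzero precisely because every $\alpha_j \neq 1$. As $\A(X_\sigma)$ is nilpotent in positive degrees, such an element is invertible; the same decomposition, via the $\lambda$-ring structure on $\M(X_\sigma)$, shows that $\lambda(\NN_{X/X_\sigma})$ is already invertible there.

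For~\eqref{LRR} itself I would first record the case of a trivial base action as a base case. If $\sigma$ fixes $X$ pointwise, then $\phi$ acts on each generalized eigenbundle $\VV_\alpha$ as the scalar $\alpha$, so ordinary Hirzebruch--Riemann--Roch applied summand by summand gives $\chi(\XX, \xi) = \sum_\alpha \alpha \int_X \td(X)\, \ch(\VV_\alpha) = \int_X \td(X)\, \ct(\xi) = \Tt(X, \xi)$, which is~\eqref{LRR} because here $X_\sigma = X$ and the factor $\lambda(\NN_{X/X_\sigma})^{-1}$ is trivial. The general case reduces to this one by localization: both sides of~\eqref{LRR} are additive in $\xi$, and $\chi(\XX, \xi)$ is the value at $\sigma$ of the equivariant Euler characteristic in the representation ring $R(\langle \sigma \rangle)$. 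Writing $i \colon X_\sigma \incl X$, the Atiyah--Segal localization theorem makes $i^*$ an isomorphism after localizing $R(\langle \sigma \rangle)$ at $\sigma$, while the self-intersection formula says that $i^* i_*$ is multiplication by $\lambda(\NN_{X/X_\sigma})$, so that $i_*^{-1} = \lambda(\NN_{X/X_\sigma})^{-1} i^*$. Since the Lefschetz characteristic is pushforward to a point and $\pi^X \circ i = \pi^{X_\sigma}$, this gives $\chi(\XX, \xi) = \chi(\XX_\sigma, i_*^{-1}[\xi]) = \chi(\XX_\sigma, \lambda(\NN_{X/X_\sigma})^{-1} \cdot \xi|_{X_\sigma})$, and the base case rewrites the right-hand side as $\Tt(X_\sigma, \lambda(\NN_{X/X_\sigma})^{-1} \cdot \xi|_{X_\sigma})$.

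The main obstacle is precisely this concentration step: justifying that the global graded trace is captured by an arbitrarily small neighborhood of $X_\sigma$, carrying the exact normalizing factor $\lambda(\NN_{X/X_\sigma})^{-1}$ of~\eqref{lambda_N}. This is the substance of the holomorphic Lefschetz fixed-point theorem, and the most transparent self-contained route, following~\cite{Donovan}, is to resolve the structure sheaf of $X_\sigma$ near $X_\sigma$ by the Koszul complex $\wedge^{\gr} \NN_{X/X_\sigma}^{\,\vee}$: its equivariant Euler characteristic contributes exactly the factor $\lambda(\NN_{X/X_\sigma})$, while the absence of any $\sigma$-invariant vector transverse to $X_\sigma$ forces every contribution off the fixed locus to cancel in the alternating sum.
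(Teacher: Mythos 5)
You should know at the outset that the paper contains no proof of Theorem \ref{th_LRR} to compare against: the theorem is imported wholesale from \cite{Donovan}, the only surrounding remarks being that expanding the right-hand side over the components of $X_\sigma$ recovers \cite[Cor.\ 5.5]{Donovan} and that the trivial pair $(X,1_X)$ degenerates to Grothendieck--Riemann--Roch (\ref{GRR}). Judged as a reconstruction, your outline has the right architecture and is the standard one: linearizing the finite-order action (Cartan's averaging trick) does prove smoothness of $X_\sigma$ and the eigenbundle splitting of $\NN_{X/X_\sigma}$ with no eigenvalue $1$; your base case is correct (on each generalized eigenbundle $\VV_\alpha$ the map $\phi$ is $\alpha$ plus a nilpotent, which contributes nothing to traces, so HRR summand by summand gives exactly $\Tt(X,\xi)$, consistent with the definition of $\ct$); and the concentration-via-localization scheme, with $i^* i_*$ equal to multiplication by $\lambda(\NN_{X/X_\sigma})$, is how the result is actually proved in the literature. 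A minor fixable point: identifying $\chi(\XX,\xi)$ with evaluation at $\sigma$ of a class in $R(\langle\sigma\rangle)$ presumes that $\phi$ generates an action of the finite cyclic group, which the paper's pairs $(\VV,\phi)$ need not satisfy ($\phi^{\mathrm{ord}\,\sigma}$ is only some automorphism of $\VV$); one either rigidifies first or works with the eigenvalue decomposition directly, as Donovan does.

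Two points are genuine gaps. First, your claim that the eigenvalue decomposition shows $\lambda(\NN_{X/X_\sigma})$ is ``already invertible'' in $\M(X_\sigma)$ is false as stated (the paper's own phrasing commits the same elision). Over a point, the subring of $\M$ generated by trivial bundles with scalar automorphisms is the monoid algebra of $(\CC,\times)$, and at an isolated fixed point with normal eigenvalue $\beta \neq 1$ of order $m$ one has $\lambda(\NN) = 1 - (\triv,\beta^{-1})$, which is a zero divisor there: $\big(1 - (\triv,\beta^{-1})\big)\cdot\big((\triv,1) + (\triv,\beta^{-1}) + \cdots + (\triv,\beta^{-(m-1)})\big) = 1 - (\triv,\beta^{-m}) = 0$. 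Invertibility genuinely holds only after the eigenvalues are evaluated as complex scalars, i.e.\ in $K(X_\sigma)\otimes\CC$ or equivalently after applying $\ct$ --- and there your \emph{first} argument (nonzero degree-zero term $\prod_j (1-\alpha_j^{-1})$ plus nilpotence of $\CC\otimes\A(X_\sigma)$ in positive degrees) is the correct and sufficient one, since (\ref{LRR}) only uses $\lambda(\NN_{X/X_\sigma})^{-1}$ through $\Tt$. Second, and circularly related: the concentration step you flag as ``the main obstacle'' is not an obstacle but the entire content of the theorem, and your final paragraph only names the mechanism (Koszul resolution of $\mathcal{O}_{X_\sigma}$, cancellation of contributions supported off the fixed locus) without carrying it out; moreover the localization isomorphism you invoke presupposes exactly the invertibility just discussed. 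A self-contained write-up would have to prove the localization/concentration theorem in this coherent setting, or else do what the paper does and cite \cite{Donovan} for the whole statement --- in which case your sketch should be honest that it is a proof outline conditional on that input.
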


Expanding the right-hand side of (\ref{LRR}) as a sum over the connected components of the fixed locus, one recovers the formulation of \cite[Cor. 5.5]{Donovan}, which reduces in the case of curves to the celebrated Chevalley-Weil formula. On the other hand, equation (\ref{LRR}) in the particular case where $\XX = (X, 1_X)$ and $\xi = (\VV, 1_\VV)$ is the classical Grothendieck-Riemann-Roch formula (see \cite{Borel-Serre}):
\begin{equation} \label{GRR}
\chi(X, \VV) = \Tt(X, \VV).
\end{equation}

\subsection*{Formal vector bundles}

Given a smooth projective pair $\XX$, it will be convenient to consider elements of the polynomial ring $\M(\XX)[y]$, which we call \emph{formal (virtual) vector bundles}. We extend the definitions of the Lefschetz characteristic and Todd trace to formal vector bundles coefficient-wise and consider the result as an element of the polynomial ring $\CC[y]$. For example, if we define the formal vector bundle
\begin{equation} \label{omega_gr} \Omega_X^\bullet \df \sum_{p=0}^n \Omega_X^p \, y^p, \end{equation}
then the $y$-polynomial (\ref{y-char}) of a virtual vector bundle $\xi \in \M(\XX)$ may be expressed as the formal Lefschetz characteristic
\begin{equation} \label{y-char_again} \chi_y(\XX, \xi) = \chi(\XX, \Omega_X^\bullet \cdot \xi). \end{equation}

Also, for $\xi \in \M(\XX)$, consider the associated formal vector bundle defined by
$$ \lambda_y(\xi) \df \sum_p (\wedge^p \xi^\vee) \, y^p, $$
where $\xi^\vee$ denotes the dual of $\xi$. It is readily checked that for $\xi, \nu \in \M(\XX)$ we have
\begin{equation} \label{lambda_y}
\lambda_y(\xi + \nu) = \lambda_y(\xi) \cdot \lambda_y(\nu).
\end{equation}
Consistently with (\ref{lambda_N}), we put $\lambda(\xi) \df \lambda_{-1}(\xi)$; provided
$\lambda(\xi)$ is invertible in $\M(\XX)$, we also define $ \widetilde{\lambda}_y(\xi) \df \lambda(\xi)^{-1} \cdot \lambda_y(\xi)$.

\subsection*{Polynomial vector bundles on complete intersections}

Let $Z$ be a smooth projective complete intersection with a fixed embedding $Z \incl \PP^m$ of multidegree $\dd = (d_1, \ldots, d_r)$ in a projective space (we allow $r = 0$, in which case $Z = \PP^m$). Let $\HH$ be the restriction to $Z$ of the line bundle on $\PP^m$ associated to its hyperplane class. We will say that a virtual vector bundle $\xi \in \M(Z)$ is \emph{weakly polynomial} if there exists a Laurent polynomial $f \in \CC[h, h^{-1}]$ for which
$$ \ct \xi = f( \ch \HH ). $$
In that case, the Laurent polynomial $f$ will be said to be \emph{associated} to $\xi$ (note that associated polynomials are not unique). If $\xi$ lies in the subring of $\M(Z)$ additively generated by the pairs of the form $(\HH^d, \alpha)$, with $\alpha \in \CC$, $d \in \ZZ$,
then $\xi$ will be said to be \emph{strongly polynomial}. Strongly polynomial clearly implies weakly polynomial since $(\HH^d, \alpha)$ admits the monomial $\alpha h^d$ as associated polynomial.

To the multidegree $\dd = (d_1, \ldots, d_r)$ we associate the power series in two variables
\begin{equation} \label{Phi_d} \Phi_\dd(x,y) \df \frac{1}{(1 + x y) (1 - x)} \prod_{i=1}^r \frac{(1 + x y)^{d_i} - (1-x)^{d_i}}{(1 + x y)^{d_i} + y(1-x)^{d_i}}. \end{equation}
Also, if $\Phi$ is a power series in $x$, the notation $[x^m]\{ \Phi \}$ will be used to denote its $m^\ieme$ coefficient.

\begin{lemma} \label{lemma_LRR}

Let $Z$ be a smooth complete intersection of multidegree $\dd$ and $\HH$ the restriction to $Z$ of the hyperplane line bundle on the ambient projective space $\PP^m$. For any Laurent polynomial $f \in \CC[h,h^{-1}]$, we have
$$ \chi_y(Z, f(\HH)) = [x^m] \left\{ \Phi_\dd(x,y) \, f \! \! \left( \frac{1 + x y}{1 - x} \right) \right\}. $$

\end{lemma}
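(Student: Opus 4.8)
The plan is to reduce the statement to the Grothendieck-Riemann-Roch formula (\ref{GRR}) and then perform an explicit change of variables that collapses the resulting coefficient extraction onto the power series $\Phi_\dd$. Since $Z$ carries the trivial automorphism, the Chern trace $\ct$ coincides with the Chern character $\ch$, so by (\ref{y-char_again}) and (\ref{GRR}),
$$ \chi_y(Z, f(H)) = \Tt(Z, \Omega_Z^\gr \cdot f(H)) = \int_Z \td(Z)\, \ch(\Omega_Z^\gr)\, f(e^h), $$
where $h \df c_1(H)$ and I used $\ct(f(H)) = f(\ch H) = f(e^h)$, which is a well-defined power series in $h$ because $f$ is a Laurent polynomial. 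Writing the Chern roots of $T_* Z$ as $a_j$, one has $\td(Z) = \prod_j a_j/(1 - e^{-a_j})$ and $\ch(\Omega_Z^\gr) = \ch(\lambda_y(T_*Z)) = \prod_j (1 + y e^{-a_j})$, so the integrand is the multiplicative characteristic class $\prod_j Q(a_j)$ attached to the power series $Q(t) \df t(1 + y e^{-t})/(1 - e^{-t})$, multiplied by $f(e^h)$.

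Next I would compute $T_* Z$ in $\M(Z)$. The Euler sequence restricted to $Z$ gives $T_*\PP^m|_Z = (m+1)H - \mathcal{O}_Z$, and the normal bundle sequence (\ref{normal}) together with $\NN_{\PP^m/Z} \iso \bigoplus_i H^{d_i}$ yields $T_* Z = (m+1)H - \mathcal{O}_Z - \sum_{i} H^{d_i}$. Since $Q$ defines a multiplicative class, the virtual roots of $T_*Z$ being $h$ with multiplicity $m+1$ together with a $0$ and the $d_i h$ in the denominator, this gives
$$ \td(Z)\,\ch(\Omega_Z^\gr) = \frac{Q(h)^{m+1}}{Q(0)\,\prod_{i} Q(d_i h)}, \qquad Q(0) = 1 + y. $$

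Now I would turn the integral into a coefficient extraction: since the class of $Z$ in $\PP^m$ is $(\prod_i d_i)\, h^r$ and $\int_{\PP^m} h^m = 1$, for any power series $P$ in $h$ one has $\int_Z P(h) = (\prod_i d_i)\,[h^{m-r}]\{P(h)\}$, which I would rewrite as the formal residue $(\prod_i d_i)\,(2\pi i)^{-1}\oint P(h)\, h^{-(m-r+1)}\, dh$. The decisive step is the substitution $e^h = (1+xy)/(1-x)$, i.e. $h = \log(1+xy) - \log(1-x) = (1+y)x + O(x^2)$, which is an invertible change of formal power series and sends $f(e^h)$ to $f((1+xy)/(1-x))$ on the nose. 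A direct computation then gives the simplifications
$$ Q(h) = \frac{h}{x}, \qquad \frac{1}{Q(d_i h)} = \frac{1}{d_i h}\cdot\frac{(1+xy)^{d_i} - (1-x)^{d_i}}{(1+xy)^{d_i} + y(1-x)^{d_i}}, \qquad dh = \frac{1+y}{(1+xy)(1-x)}\, dx. $$
Substituting these, the powers of $h$ cancel ($h^{m+1-r}$ from the numerator against $h^{-(m-r+1)}$ from the residue), the factor $Q(0) = 1+y$ cancels against the Jacobian, and the prefactor $\prod_i d_i$ cancels against the $1/d_i$ coming from each $1/Q(d_i h)$, leaving exactly $(2\pi i)^{-1}\oint \Phi_\dd(x,y)\, f\!\left(\frac{1+xy}{1-x}\right) x^{-(m+1)}\, dx = [x^m]\{\Phi_\dd(x,y)\, f\!\left(\frac{1+xy}{1-x}\right)\}$, as claimed.

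The routine parts are the two standard Chern-root identities for $\td$ and $\ch(\Omega^\gr)$ and the K-theory computation of $T_*Z$. I expect the main obstacle to be the rigorous bookkeeping of the change of variables: one must verify that every factor (the $h$-powers, the $(1+y)$'s, and $\prod_i d_i$) cancels as indicated, and justify that the substitution $h \mapsto x$ transports the formal residue around $h = 0$ to the formal residue around $x = 0$. This is a Lagrange-inversion argument, valid because the linear term $(1+y)x$ of $h(x)$ is invertible, so I would work over $\CC(y)$ (equivalently, treat $y$ as a parameter with $1 + y \neq 0$) throughout.
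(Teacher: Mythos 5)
Your proof is correct, but it takes a genuinely different route from the paper. The paper's proof is a one-liner: it observes that for monomials $f = h^k$ the statement is exactly Hirzebruch's Theorem 22.1.1 (the computation of $\chi_y(Z, H^k)$ for complete intersections), and extends to arbitrary Laurent polynomials by linearity of $\chi_y$ and of coefficient extraction. You instead reprove the cited theorem from scratch: Grothendieck--Riemann--Roch reduces $\chi_y(Z, f(H))$ to $\int_Z \td(Z)\ch(\Omega_Z^\gr) f(e^h)$; the $K$-theoretic identity $T_*Z = (m+1)H - \mathcal{O}_Z - \sum_i H^{d_i}$ (Euler plus normal bundle sequences) expresses the integrand through the multiplicative class of $Q(t) = t(1+ye^{-t})/(1-e^{-t})$; the projection formula with $[Z] = (\prod_i d_i)h^r$ converts the integral into a formal residue; and the substitution $e^h = (1+xy)/(1-x)$ produces $\Phi_\dd$. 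I checked your three key simplifications and they are exact: $1 - e^{-h} = x(1+y)/(1+xy)$ and $1 + ye^{-h} = (1+y)/(1+xy)$ give $Q(h) = h/x$; the same computation at $d_i h$ gives your formula for $1/Q(d_i h)$; and $dh = (1+y)\,dx/((1+xy)(1-x))$, so the $h$-powers ($(m+1) - r - (m-r+1) = 0$), the factors $1+y$, and the degrees $\prod_i d_i$ all cancel as you claim, leaving precisely $[x^m]\{\Phi_\dd(x,y) f((1+xy)/(1-x))\}$. The invariance of the formal residue under the substitution is legitimate since $h(x) = (1+y)x + O(x^2)$ has invertible linear term over $\CC(y)$, and since both sides are polynomials in $y$, working over $\CC(y)$ loses nothing. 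What your approach buys is self-containedness and transparency --- it explains where the series $\Phi_\dd$ and the otherwise mysterious substitution $(1+xy)/(1-x)$ come from, and it handles general Laurent $f$ directly rather than through the paper's linearity step; what the paper's citation buys is brevity and the avoidance of the change-of-variables bookkeeping, which is the only delicate point in your argument and which you correctly flag and justify.
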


\begin{proof}

That this statement holds for powers of $h$ is the content of \cite[Th. 22.1.1]{Hirzebruch}, and the general case follows by linearity. \end{proof}

\subsection*{Main formula}

We are now ready to prove the generating series expression for the trace of the induced morphism in cohomology that will be used in the sequel.

\begin{theorem} \label{main_formula}

Let $\XX = (X, \sigma)$ be a smooth projective pair and $\xi \in \M(\XX)$. Suppose that every connected component $Z$ of the fixed locus $X_\sigma$ admits a projective embedding $Z \incl \PP^{m_Z}$ for which:
\begin{itemize}
\item $Z$ is a smooth complete intersection of multidegree $\dd_Z$,
\item the virtual bundle $\widetilde{\lambda}_y(\NN_{X/Z})$ is weakly polynomial, and
\item the restriction $\xi|_{Z}$ is weakly polynomial.
\end{itemize}
Then
$$ \chi_y(\XX, \xi) = \sum_Z [x^{m_Z}] \left\{ \Phi_{\dd_Z}(x,y) \, f_Z \bigg( \frac{1 + x y}{1 - x}, y \bigg) \, g_Z \bigg( \frac{1 + x y}{1 - x} \bigg) \right\}, $$
where $g_Z(h)$ and $f_Z(h,y)$ are the Laurent polynomials associated as above to $\xi|_Z$ and $\widetilde{\lambda}_y(\NN_{X/Z})$, respectively.

\end{theorem}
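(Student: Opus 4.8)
The plan is to feed the formal bundle $\Omega_X^\gr \cdot \xi$ into the Lefschetz--Riemann--Roch formula and then reduce each fixed-component contribution to an honest Euler characteristic to which Lemma \ref{lemma_LRR} applies. First I would use the identity (\ref{y-char_again}), namely $\chi_y(\XX, \xi) = \chi(\XX, \Omega_X^\gr \cdot \xi)$, and apply Theorem \ref{th_LRR} coefficient-wise in $y$ to obtain
$$ \chi_y(\XX, \xi) = \Tt\big(X_\sigma, \lambda(\NN_{X/X_\sigma})^{-1} \cdot (\Omega_X^\gr \cdot \xi)|_{X_\sigma}\big), $$
which I would then expand as a sum over the connected components $Z$ of the fixed locus.

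The key geometric input is the behaviour of $\Omega_X^\gr$ upon restriction to a fixed component $Z$. Observing that $\Omega_X^\gr = \lambda_y(T_* X)$ by the very definition of $\lambda_y$ together with $\Omega_X^p = \wedge^p \Omega_X^1$, I would dualize the normal bundle sequence (\ref{normal}) --- equivalently write $(T_* X)|_Z = T_* Z + \NN_{X/Z}$ in $\M(Z)$ --- and invoke the multiplicativity (\ref{lambda_y}) to get the $\sigma$-equivariant factorization
$$ (\Omega_X^\gr)|_Z = \lambda_y\big((T_* X)|_Z\big) = \lambda_y(T_* Z) \cdot \lambda_y(\NN_{X/Z}) = \Omega_Z^\gr \cdot \lambda_y(\NN_{X/Z}). $$
Substituting this into the $Z$-summand and regrouping, the factor $\lambda(\NN_{X/Z})^{-1} \lambda_y(\NN_{X/Z})$ is by definition $\widetilde{\lambda}_y(\NN_{X/Z})$, so the contribution of $Z$ becomes $\Tt\big(Z, \Omega_Z^\gr \cdot \widetilde{\lambda}_y(\NN_{X/Z}) \cdot \xi|_Z\big)$.

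Now comes the reduction that makes the formula computable: since $Z$ lies in the fixed locus, $\sigma$ restricts to the identity on $Z$, the pair $\mathcal{Z}$ is $(Z, 1_Z)$, and the Todd trace of $\Omega_Z^\gr \cdot \eta$ coincides, by the Grothendieck--Riemann--Roch specialization (\ref{GRR}) applied coefficient-wise, with $\chi(Z, \Omega_Z^\gr \cdot \eta) = \chi_y(Z, \eta)$, where $\eta = \widetilde{\lambda}_y(\NN_{X/Z}) \cdot \xi|_Z$. Because $\ct$ is a ring homomorphism, the two weakly polynomial hypotheses give $\ct(\eta) = f_Z(\ch H, y)\, g_Z(\ch H)$, so $\eta$ is weakly polynomial with associated Laurent polynomial $f_Z(h,y)\,g_Z(h)$. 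Applying Lemma \ref{lemma_LRR} to each $\chi_y(Z, \eta)$ and summing over $Z$ then yields the stated expression.

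I expect the delicate points to be bookkeeping rather than conceptual: one must check that Theorem \ref{th_LRR} and (\ref{GRR}) extend coefficient-wise to formal bundles in $\M(\XX)[y]$ (immediate from $\CC$-linearity of $\chi$ and $\Tt$), and that all the bundle identities above genuinely respect the pair structure, i.e. are $\sigma$-equivariant. The one step deserving real attention is the equivariance of the splitting $(\Omega_X^\gr)|_Z = \Omega_Z^\gr \cdot \lambda_y(\NN_{X/Z})$: it is essential that, although $\sigma$ acts as the identity on the \emph{base} $Z$, it acts non-trivially on the normal directions, and it is exactly this residual action on $\NN_{X/Z}$ --- recorded by $\lambda(\NN_{X/Z})^{-1}$ and by the eigenvalue-weighted Chern trace --- that survives into $\widetilde{\lambda}_y(\NN_{X/Z})$ and drives the final answer.
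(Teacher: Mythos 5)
Your proposal is correct and takes essentially the same route as the paper's proof: you apply the Lefschetz--Riemann--Roch formula to $\Omega_X^\gr \cdot \xi$ via (\ref{y-char_again}), factor $\Omega_X^\gr|_Z = \Omega_Z^\gr \cdot \lambda_y(\NN_{X/Z})$ using the normal bundle sequence (\ref{normal}) and the multiplicativity (\ref{lambda_y}), reduce each component's contribution to $\chi_y\big(Z, \widetilde{\lambda}_y(\NN_{X/Z}) \cdot \xi|_Z\big)$ by the Grothendieck--Riemann--Roch specialization (\ref{GRR}), and conclude with Lemma \ref{lemma_LRR}, exactly as the paper does. Your closing remarks on coefficient-wise extension to $\M(\XX)[y]$ and on the residual $\sigma$-action on $\NN_{X/Z}$ merely make explicit what the paper compresses into the word ``formally.''
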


\begin{proof}

Applying formally the Lefschetz-Riemann-Roch formula (\ref{LRR}) to the right-hand side of (\ref{y-char_again}), we find
\begin{equation} \label{first_step} \chi_y(\XX, \xi) = \sum_Z \Tt \big(Z, \lambda(\NN_{X/Z})^{-1} \cdot \Omega_X^\gr|_Z \cdot \xi|_Z \big), \end{equation}
the sum being taken over the set of connected components $Z$ of $X_\sigma$. Note that the formal vector bundle $\Omega_X^\gr$, as defined by (\ref{omega_gr}), may be interpreted as $\lambda_y(T_* X)$. But according to the short exact sequence (\ref{normal}), for every connected component of $X_\sigma$ the identity
$$ T_* X|_Z  = T_* Z + \NN_{X/Z} $$
holds in $\M(Z)$; hence by (\ref{lambda_y}) we have
$$ \Omega_X^\gr|_Z = \lambda_y(T_* X)|_Z = \lambda_y(T_* Z) \cdot \lambda_y(\NN_{X/Z}) = \Omega_Z^\gr \cdot \lambda_y(\NN_{X/Z}). $$
Using this and applying formally the Grothendieck-Riemann-Roch formula (\ref{GRR}), the contribution of $Z$ to the right-hand side of (\ref{first_step}) can we written as
$$ \Tt \big(Z, \widetilde{\lambda}_y(\NN_{X/Z}) \cdot \Omega_Z^\gr \cdot \xi|_Z \big) = \chi_y(Z, \widetilde{\lambda}_y(\NN_{X/Z}) \cdot \xi|_Z). $$
But we are given that $\widetilde{\lambda}_y(\NN_{X/Z}) \cdot \xi|_Z$ is a weakly polynomial formal vector bundle with associated Laurent polynomial $f_Z(h,y) \, g_Z(h)$, hence the conclusion follows by Lemma \ref{lemma_LRR}. \end{proof}

The hypothesis in Theorem \ref{main_formula} about the normal bundles is automatically satisfied whenever they are strongly polynomial. Indeed, the following lemma follows directly from the multiplicativity of $\widetilde{\lambda}_y$.

\begin{lemma} \label{strongly_pol_lemma}

Let $Z$ be a complete intersection and $\xi \in \M(Z)$ a strongly polynomial bundle such that $\lambda(\xi)$ is invertible. Then the virtual bundle $\widetilde{\lambda}_y(\xi)$ is also strongly polynomial; explicitly, if $\xi = \sum_i m_i (\HH^{e_i}, \alpha_i) $ with $m_i, e_i \in \ZZ$ and $\alpha_i \in \CC^\times$, then
$$ \widetilde{\lambda}_y(\xi) = \prod_i \left( \frac{1 + (\HH^{-e_i}, \alpha_i) \, y}{1 - (\HH^{-e_i}, \alpha_i)} \right)^{m_i}, $$
hence admits as associated Laurent polynomial
$$ f_\xi(h,y) = \prod_i \left( \frac{h^{e_i} + \alpha_i \, y}{h^{e_i} - \alpha_i} \right)^{m_i}. $$
\end{lemma}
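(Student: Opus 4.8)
The plan is to use that $\widetilde{\lambda}_y$ turns the additive structure of $\M(Z)$ into multiplication, so that the computation reduces to a single line bundle $(H^e,\alpha)$, where only $\wedge^0$ and $\wedge^1$ survive.

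First I would check that $\widetilde{\lambda}_y$ is multiplicative. Specialising (\ref{lambda_y}) at $y=-1$ gives $\lambda(\xi+\nu)=\lambda(\xi)\,\lambda(\nu)$, so whenever $\lambda(\xi)$ and $\lambda(\nu)$ are invertible one has $\widetilde{\lambda}_y(\xi+\nu)=\lambda(\xi)^{-1}\lambda(\nu)^{-1}\lambda_y(\xi)\,\lambda_y(\nu)=\widetilde{\lambda}_y(\xi)\,\widetilde{\lambda}_y(\nu)$, using (\ref{lambda_y}) once more and the commutativity of $\M(Z)$. To process $\xi=\sum_i m_i(H^{e_i},\alpha_i)$ one summand at a time I must know that each $\lambda\big((H^{e_i},\alpha_i)\big)$ is invertible. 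Since $\M(Z)$ carries the trivial automorphism, the dual merely transposes the fibre scalar, so $(H^{e},\alpha)^\vee=(H^{-e},\alpha)$ and $\lambda\big((H^{e_i},\alpha_i)\big)=1-(H^{-e_i},\alpha_i)$. As $c_1(H)$ is nilpotent, this class is invertible exactly when its degree-zero component $1-\alpha_i$ is nonzero; and the degree-zero component of $\ct\,\lambda(\xi)$ equals $\prod_i(1-\alpha_i)^{m_i}$. Hence the hypothesis that $\lambda(\xi)$ be invertible amounts to $\alpha_i\neq1$ for every $i$ with $m_i\neq0$, which makes each factor invertible and justifies $\widetilde{\lambda}_y(\xi)=\prod_i\widetilde{\lambda}_y\big((H^{e_i},\alpha_i)\big)^{m_i}$.

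It then remains to evaluate $\widetilde{\lambda}_y$ on a single line bundle. Because $\wedge^p$ of a line bundle vanishes for $p\geq2$, one has $\lambda_y\big((H^e,\alpha)\big)=1+(H^{-e},\alpha)\,y$ and $\lambda\big((H^e,\alpha)\big)=1-(H^{-e},\alpha)$, whence
$$\widetilde{\lambda}_y\big((H^e,\alpha)\big)=\frac{1+(H^{-e},\alpha)\,y}{1-(H^{-e},\alpha)}.$$
Multiplying over $i$ gives the displayed formula for $\widetilde{\lambda}_y(\xi)$. Finally I would apply the ring homomorphism $\ct$, under which $(H^{-e_i},\alpha_i)$ has the monomial $\alpha_i h^{-e_i}$ as associated polynomial; clearing the powers $h^{e_i}$ in numerator and denominator turns the $i$-th factor into $(h^{e_i}+\alpha_i y)/(h^{e_i}-\alpha_i)$, which is exactly $f_\xi(h,y)$.

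The main obstacle is not the algebra but the bookkeeping of invertibility: one must make sure the global hypothesis on $\lambda(\xi)$ genuinely licenses inverting each summand on its own, and that the resulting rational expression in $h$ represents a legitimate class, its denominators $1-\alpha_i h^{-e_i}$ being invertible in $\CC\otimes\A(Z)$ precisely because $\alpha_i\neq1$. With that settled, the statement is just the multiplicativity of $\widetilde{\lambda}_y$ together with the one-line computation for line bundles.
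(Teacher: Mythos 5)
Your proof is correct and takes essentially the same route as the paper, whose entire argument consists of the remark that the lemma ``follows directly from the multiplicativity of the symbol $\widetilde{\lambda}_y$'': you derive that multiplicativity from (\ref{lambda_y}), evaluate on a single summand $(H^{e},\alpha)$ where only $\wedge^0$ and $\wedge^1$ survive, and pass to associated polynomials via the ring homomorphism $\ct$. Your additional bookkeeping --- checking that invertibility of $\lambda(\xi)$ forces each factor $1-(H^{-e_i},\alpha_i)$ to be invertible, with degree-zero part $1-\alpha_i\neq 0$ and higher terms nilpotent --- is sound and in fact more careful than anything the paper records.
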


Let us introduce new notation in order to restate the main formula for the special case of interest in next section. Define, for $\gamma \in \CC^\times$ and $e \geq 0$,
$$ \Psi_{\gamma, e}(x,y) \df \frac{ \gamma (1 + x y)^e + y (1 - x)^e } { \gamma (1 + x y)^e - (1 - x)^e }, $$
so that the formal series $\Phi_\dd$ of (\ref{Phi_d}) may be expressed as
\begin{equation} \label{Phi_Psi} \Phi_\dd(x,y) = \frac{1}{(1+x y) (1 - x)} \prod_{i=1}^r \frac{1}{\Psi_{1,d_i}(x,y)}. \end{equation}
Then, if $f_\xi(h,y)$ denotes the polynomial given in Lemma \ref{strongly_pol_lemma} above, we have
\begin{equation} \label{F_Z} F_\xi \df f_\xi \left( \frac{1 + xy}{1 - x}, y \right) = \prod_i \Psi_{1/\alpha_i, e_i}(x,y)^{m_i}. \end{equation}
Finally, by a slight abuse of notation we now write $\chi_y(X, \sigma)$ instead of $\chi_y(\XX,\CC)$ when the bundle above the projective pair $\XX = (X, \sigma)$ is trivial.

\begin{corollary} \label{cor_LRR}

Let $(X, \sigma)$ be a smooth projective pair such that every connected component $Z$ of the fixed locus $X_\sigma$ admits a projective embedding $Z \incl \PP^{m_Z}$ for which $Z$ is a complete intersection of multidegree $\dd_Z$ and $\NN_{X/Z}$ is strongly polynomial. Then
$$ \chi_y(X, \sigma) = \sum_Z [x^{m_Z}] \big\{ \Phi_{\dd_Z}(x,y) \, F_Z (x,y) \big\}, $$
where $F_Z(x,y)$ is the power series associated to $\xi = \NN_{X/Z}$ as in (\ref{F_Z}).

\end{corollary}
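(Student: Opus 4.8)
The plan is to obtain this as an immediate specialization of Theorem \ref{main_formula}, taking the coefficient bundle to be trivial. First I would unwind the notation on the left-hand side: by convention $\chi_y(X,\sigma)$ is the $y$-polynomial $\chi_y(\XX, \triv)$ attached to the unit $\triv = (\mathcal{O}_X, 1)$ of the ring $\M(\XX)$, so the relevant bundle in Theorem \ref{main_formula} is $\xi = \triv$. Since $\ct(\triv) = \ch(\mathcal{O}_X) = 1$, the restriction $\xi|_Z = \triv$ is weakly polynomial with constant associated polynomial $g_Z(h) = 1$; this verifies the third hypothesis of the theorem and collapses the factor $g_Z((1+xy)/(1-x))$ to $1$.

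Next I would confirm the two remaining hypotheses for each connected component $Z$. The complete-intersection condition is assumed outright. For the requirement that $\widetilde{\lambda}_y(\NN_{X/Z})$ be weakly polynomial, I would combine two facts: the Lefschetz-Riemann-Roch formula (Theorem \ref{th_LRR}) guarantees that $\lambda(\NN_{X/Z})$ is invertible, so $\widetilde{\lambda}_y(\NN_{X/Z})$ is defined; and the lemma following Theorem \ref{main_formula} shows that applying $\widetilde{\lambda}_y$ to a strongly polynomial bundle yields a strongly (hence weakly) polynomial bundle. As $\NN_{X/Z}$ is assumed strongly polynomial, writing it as $\sum_i m_i (H^{e_i}, \alpha_i)$ and feeding this into that lemma produces the explicit associated Laurent polynomial $f_Z(h,y) = f_{\NN_{X/Z}}(h,y)$.

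With the hypotheses in place, Theorem \ref{main_formula} applied to $\xi = \triv$ yields
$$ \chi_y(X,\sigma) = \sum_Z [x^{m_Z}] \left\{ \Phi_{\dd_Z}(x,y) \, f_Z\!\left( \frac{1+xy}{1-x}, y \right) \right\}. $$
The concluding step is purely a matter of definition: by (\ref{F_Z}), substituting $h = (1+xy)/(1-x)$ into $f_Z = f_{\NN_{X/Z}}$ is precisely the power series $F_Z(x,y)$ associated to $\xi = \NN_{X/Z}$, which gives the claimed identity.

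I do not anticipate any genuine difficulty; the proof is essentially bookkeeping. The one point that must be handled with care is the interface between hypotheses: Theorem \ref{main_formula} demands that $\widetilde{\lambda}_y(\NN_{X/Z})$ be \emph{weakly} polynomial, whereas the corollary hypothesizes that $\NN_{X/Z}$ is \emph{strongly} polynomial, so the argument hinges on routing this through the lemma above together with the invertibility supplied by Theorem \ref{th_LRR}.
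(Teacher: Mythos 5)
Your proposal is correct and follows exactly the route the paper intends: the paper states the corollary without a separate proof precisely because it is Theorem \ref{main_formula} specialized to $\xi = \triv$ (so $g_Z = 1$), with the weak polynomiality of $\widetilde{\lambda}_y(\NN_{X/Z})$ supplied by the preceding lemma (whose applicability rests on the invertibility of $\lambda(\NN_{X/Z})$ from Theorem \ref{th_LRR}) and the substitution $h = (1+xy)/(1-x)$ matching the definition of $F_Z$ in (\ref{F_Z}). You have also correctly identified the one point requiring care, namely bridging the strongly polynomial hypothesis on $\NN_{X/Z}$ to the weakly polynomial hypothesis on $\widetilde{\lambda}_y(\NN_{X/Z})$ demanded by the theorem.
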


\section{Smooth projective hypersurfaces}

We now restrict our attention to the case where the automorphism $\sigma$ extends to a projective transformation of the ambient projective space.

\subsection*{Fixed points of projective transformations}

Let $V$ be a finite-dimensional vector space and consider a projective transformation $\sigma \in \PGL(V)$ of finite order acting on $\PP = \PP(V)$. Choosing a linear representative $\widetilde{\sigma} \in \GL(V)$ for $\sigma$, the fixed locus of $\sigma$ in $\PP$ decomposes as
$$ \PP_\sigma = \bigsqcup_{\alpha \in \CC^\times} \PP_\alpha, \qquad \PP_\alpha = \PP(V_\alpha), $$
where $V_\alpha$ is the $\alpha$-eigenspace of $\widetilde{\sigma}$ in $V$. Note that the hypothesis that $\sigma$ has finite order implies that $\widetilde{\sigma}$ is diagonalizable, hence $V = \bigoplus_\alpha V_\alpha$.

Recall that the tangent bundle of $\PP$ may be written as
$$ T_* \PP = \HH \otimes \underline{V}, $$
where $\HH$ denotes the hyperplane line bundle on $\PP$ and $\underline{V}$ the trivial vector bundle with fiber $V$. Using the corresponding fact for a fixed component $\PP_\alpha$ and the short exact sequence (\ref{normal}), we find that the normal bundle of $\PP_\alpha$ may be written as
$$\NN_{\PP/\PP_\alpha} = \HH_\alpha \otimes \underline{V/V_\alpha} = \bigoplus_{\beta \neq \alpha} \HH_\alpha \otimes \underline{V_\beta}, $$
where $\HH_\alpha$ denotes the restriction of $H$ to $\PP_\alpha$. Taking the endomorphism induced by $\sigma$ into account, we obtain in $\M(\PP_\alpha)$ the decomposition
\begin{equation} \label{normal_PP_alpha} \NN_{\PP/\PP_\alpha} = \sum_{\beta \neq \alpha} m_\beta \bigg( \HH_\alpha, \frac{\beta}{\alpha} \bigg), \end{equation}
where $m_\beta = \dim V_\beta$ is the multiplicity of the eigenvalue $\beta$ of $\widetilde{\sigma}$. Note that the description (\ref{normal_PP_alpha}) does not depend on the choice of the representative $\widetilde{\sigma}$ of $\sigma$.

\begin{lemma} \label{lemma_P}

If $\dim V = n + 2$ and $\sigma \in \PGL(V)$ has finite order, we have
$$ \chi_y(\PP, \sigma) = \sum_{\alpha \in \CC^\times} [x^{m_\alpha - 1}] \bigg\{ \frac{1}{(1 + x y)(1-x)} \prod_{\beta \neq \alpha} \Psi_{\alpha/\beta,1}(x,y) \bigg\} = \frac{1 - (-y)^{n+2}}{1 + y}, $$
where $\widetilde{\sigma} \in \GL(V)$ is any lift of $\sigma$.

\end{lemma}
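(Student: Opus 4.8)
The statement packages two independent assertions: the first equality is a direct application of Corollary \ref{cor_LRR} to the pair $(\PP,\sigma)$, while the second is an independent evaluation of the same quantity by elementary means. The plan is therefore to compute $\chi_y(\PP,\sigma)$ in two different ways and to obtain the generating-series identity by comparison.

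For the first equality I would check the hypotheses of Corollary \ref{cor_LRR}. Each connected component of the fixed locus is $\PP_\alpha = \PP(V_\alpha) \cong \PP^{m_\alpha-1}$, which is a projective space, hence a complete intersection of empty multidegree; accordingly $m_Z = m_\alpha - 1$ and, by (\ref{Phi_Psi}) with an empty product, $\Phi_{\dd_\alpha}(x,y) = 1/\big((1+xy)(1-x)\big)$. The normal bundle is strongly polynomial by (\ref{normal_PP_alpha}), so the lemma preceding (\ref{F_Z}) together with (\ref{F_Z}) gives $F_\alpha(x,y) = \prod_{\beta\neq\alpha}\Psi_{\alpha/\beta,1}(x,y)$, each factor carrying the eigenvalue multiplicity $m_\beta$. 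Substituting into Corollary \ref{cor_LRR} and summing over the distinct eigenvalues $\alpha$ yields the first equality.

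The second equality I would obtain by reading off $\chi_y(\PP,\sigma)$ directly from the cohomology of projective space. Since $\dim V = n+2$ we have $\PP = \PP^{n+1}$, whose Hodge groups are $\H^i(\PP,\Omega^p_\PP) = \CC$ when $i = p$ with $0 \leq p \leq n+1$ and vanish otherwise. As $\PGL(V)$ is path-connected, $\sigma$ is homotopic to the identity and therefore acts trivially on $\H^\gr(\PP,\CC)$; equivalently, each $\H^{p,p}$ is spanned by a power of the $\sigma$-invariant hyperplane class, on which $\sigma^*$ acts by the identity. Hence $\tr\big(\sigma^*,\H^p(\PP,\Omega^p)\big) = 1$, so $\chi^p(\PP,\sigma) = (-1)^p$ and
$$ \chi_y(\PP, \sigma) = \sum_{p=0}^{n+1} (-y)^p = \frac{1 - (-y)^{n+2}}{1 + y}, $$
a value that is manifestly independent of $\sigma$ and of the chosen lift $\widetilde{\sigma}$.

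Comparing the two computations closes the stated chain of equalities. I do not expect a genuine obstacle: the only points demanding care are the bookkeeping of the multiplicities $m_\beta$ when assembling $F_\alpha$ in the first step, and the (standard) justification that $\sigma$ acts trivially on $\H^\gr(\PP)$ in the second. It is worth emphasizing that this indirect strategy is exactly what keeps the proof short—establishing the combinatorial identity $\sum_\alpha [x^{m_\alpha-1}]\{\Phi_{\dd_\alpha}(x,y)\,F_\alpha(x,y)\} = (1-(-y)^{n+2})/(1+y)$ by a direct residue or partial-fraction calculation would be considerably more laborious, and it is precisely this that the two-sided evaluation circumvents.
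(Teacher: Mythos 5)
Your proposal is correct and follows essentially the same route as the paper: the first equality is read off from Corollary \ref{cor_LRR} together with the description (\ref{normal_PP_alpha}) of $\NN_{\PP/\PP_\alpha}$, and the second from the fact that $\sigma$ fixes the hyperplane class (equivalently, is homotopic to the identity) and hence acts trivially on $\H^\gr(\PP)$, so that $\chi_y(\PP,\sigma) = \sum_{p=0}^{n+1} (-y)^p$. Your explicit bookkeeping of the exponents $m_\beta$ in $F_\alpha(x,y) = \prod_{\beta \neq \alpha} \Psi_{\alpha/\beta,1}(x,y)^{m_\beta}$ is in fact slightly more careful than the displayed statement of the lemma, where these exponents are tacitly understood.
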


\begin{proof}

The left-hand side is the expression for $\chi_y(\PP,\sigma)$ given by Corollary \ref{cor_LRR} in which (\ref{normal_PP_alpha}) was used. On the other hand, $\sigma$ preserves the hyperplane class of $\PP$, hence acts trivially on its cohomology. We may thus evaluate directly
$$ \chi_y(\PP,\sigma) = \chi_y(\PP) = \sum_{p=0}^{n+1} \chi^p(\PP) \, y^p = \sum_{p=0}^{n+1} (-y)^p = \frac{1 - (-y)^{n+2}}{1 + y}, $$
from which the conclusion follows. \end{proof}

\subsection*{Smooth hypersurfaces}

With the notations of the previous section, suppose that $X$ is a smooth hypersurface in $\PP$ that is stable under the action of a projective transformation $\sigma$. If $X$ is defined by a homogeneous polynomial $f$ of degree $d$, we have $f \circ \widetilde{\sigma} = \lambda f$ for a certain constant $\lambda \neq 0$. Choosing the representative $\widetilde{\sigma}$ of $\sigma$ appropriately, we may always assume that $\lambda = 1$, i.e., that the defining polynomial $f$ is invariant under $\widetilde{\sigma}$.

\begin{theorem} \label{th_main}

Let $X$ be a smooth hypersurface of degree $d$ in $\PP(V)$ that is stable under the action of a projective transformation $\sigma \in \PGL(V)$ of finite order. The formal Lefschetz characteristic of $\sigma$ acting on $X$ is given by
$$ \chi_y(X,\sigma) = \sum_{\alpha \in \CC^\times} [x^{m_\alpha-1}] \bigg\{ \frac{1}{(1 + x y) (1 - x)} \cdot \frac{1}{\Psi_{\alpha^d, d}(x,y)} \cdot \prod_{\beta \neq \alpha} \Psi_{\alpha/\beta,1}(x,y)^{m_\beta} \bigg\}, $$
where $\widetilde{\sigma} \in \GL(V)$ is the lift of $\sigma$ for which a defining polynomial for $X$ is invariant, and $\{m_\alpha\}_{\alpha \in \CC}$ are its eigenvalue multiplicities.

\end{theorem}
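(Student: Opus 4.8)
The plan is to apply Corollary \ref{cor_LRR} to the smooth projective pair $(X,\sigma)$, so that the entire proof reduces to identifying the pieces of the fixed locus together with the equivariant structure of their normal bundles. Since $\sigma$ is the restriction to $X$ of a projective transformation of $\PP = \PP(V)$, its fixed locus is $X_\sigma = X \cap \PP_\sigma = \bigsqcup_\alpha Z_\alpha$ with $Z_\alpha \df X \cap \PP_\alpha$ and $\PP_\alpha = \PP(V_\alpha)$ of dimension $m_\alpha - 1$. By Theorem \ref{th_LRR} the locus $X_\sigma$ is smooth and $\lambda(\NN_{X/X_\sigma})$ is invertible, so the same holds for each $Z_\alpha$. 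Although $Z_\alpha$ need not be connected, it is a complete intersection in $\PP_\alpha$; since the Todd characteristic is additive and Lemma \ref{lemma_LRR} makes no connectedness hypothesis, I may treat each $Z_\alpha$ as a single complete-intersection contribution. Note also that terms with $m_\alpha = 0$ extract the coefficient $[x^{-1}]$ of a power series and hence vanish, which is why the statement may sum over all of $\CC^\times$.

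First I would pin down the multidegree of $Z_\alpha$. Writing $f_\alpha$ for the restriction to $V_\alpha$ of the ($\widetilde{\sigma}$-invariant) defining polynomial $f$, the fact that $\widetilde{\sigma}$ acts on $V_\alpha$ by the scalar $\alpha$ forces $f_\alpha(\widetilde{\sigma} v) = \alpha^d f_\alpha(v)$, while invariance gives $f_\alpha \circ \widetilde{\sigma} = f_\alpha$; hence $(\alpha^d - 1)\, f_\alpha = 0$. Thus if $\alpha^d \neq 1$ then $f_\alpha \equiv 0$, so $\PP_\alpha \subseteq X$ and $Z_\alpha = \PP_\alpha$ is of empty multidegree, whereas if $\alpha^d = 1$ then $Z_\alpha = \{ f_\alpha = 0 \}$ is a hypersurface of multidegree $(d)$ in $\PP_\alpha$ (the degenerate possibility $f_\alpha \equiv 0$ being absorbed into the first case).

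Next I would compute $\NN_{X/Z_\alpha}$ in $\M(Z_\alpha)$. Transitivity of normal bundles along $Z_\alpha \subseteq X \subseteq \PP$ and along $Z_\alpha \subseteq \PP_\alpha \subseteq \PP$ gives, after cancelling the common term $\NN_{\PP/Z_\alpha}$,
$$ \NN_{X/Z_\alpha} = \NN_{\PP_\alpha/Z_\alpha} + \NN_{\PP/\PP_\alpha}|_{Z_\alpha} - \NN_{\PP/X}|_{Z_\alpha}. $$
The middle term is read off from (\ref{normal_PP_alpha}) as $\sum_{\beta \neq \alpha} m_\beta (H_\alpha, \beta/\alpha)$. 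For the other two I would use that the conormal bundle of a $\widetilde{\sigma}$-invariant degree-$d$ hypersurface is $H^{-d}$ with its natural equivariant structure, the trivialization by the defining polynomial being equivariant precisely because that polynomial is invariant. Dualizing and restricting to $\PP_\alpha$, where (consistently with the derivation of (\ref{normal_PP_alpha})) $H_\alpha$ carries the equivariant scalar $\alpha^{-1}$, this yields $\NN_{\PP/X}|_{Z_\alpha} = (H_\alpha^d, \alpha^{-d})$ and likewise $\NN_{\PP_\alpha/Z_\alpha} = (H_\alpha^d, \alpha^{-d})$ when $\alpha^d = 1$. Each summand is of the form $(H_\alpha^e, \gamma)$, so $\NN_{X/Z_\alpha}$ is strongly polynomial and the hypotheses of Corollary \ref{cor_LRR} are met.

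Finally I would assemble the contribution of each $Z_\alpha$ through (\ref{F_Z}) and (\ref{Phi_Psi}), and it is here that the two cases must be reconciled into the single stated summand. When $\alpha^d \neq 1$ the empty multidegree gives $\tfrac{1}{(1 + x y)(1 - x)}$, and the factor $\Psi_{\alpha^d, d}(x,y)^{-1}$ arises from the term $-(H_\alpha^d, \alpha^{-d})$ of $\NN_{X/\PP_\alpha}$ via (\ref{F_Z}); when $\alpha^d = 1$ the terms $\NN_{\PP_\alpha/Z_\alpha}$ and $-\NN_{\PP/X}|_{Z_\alpha}$ cancel, and instead $\Psi_{\alpha^d, d}(x,y)^{-1} = \Psi_{1,d}(x,y)^{-1}$ is exactly the factor supplied by the multidegree-$(d)$ part of $\Phi_{(d)}$ in (\ref{Phi_Psi}). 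In both cases the remaining normal directions contribute $\prod_{\beta \neq \alpha} \Psi_{\alpha/\beta, 1}(x,y)^{m_\beta}$, producing the common summand of the theorem. I expect this reconciliation to be the crux: one must track the equivariant scalars carefully enough to see that $\Psi_{\alpha^d, d}^{-1}$ appears uniformly, whether it originates from the normal bundle (when $\PP_\alpha \subseteq X$) or from the complete-intersection structure (when $Z_\alpha$ is a genuine hypersurface); the rest is bookkeeping already packaged into Corollary \ref{cor_LRR}.
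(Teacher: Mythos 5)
Your route is essentially the paper's own: apply Corollary \ref{cor_LRR} to the decomposition $X_\sigma = \bigsqcup_\alpha (X \cap \PP_\alpha)$, compute the normal bundles by transitivity exactly as in (\ref{sum_normal}), identify $\NN_{\PP/X}|_{X_\alpha}$ equivariantly as $(H^d, \alpha^{-d})$ via the invariant defining polynomial, and reconcile the two cases by noting that $\Psi_{\alpha^d,d}^{-1}$ comes from the normal bundle via (\ref{F_Z}) when $\alpha^d \neq 1$ and from the multidegree-$(d)$ factor of $\Phi_\dd$ via (\ref{Phi_Psi}) when $\alpha^d = 1$. Two of your side remarks are fine and even slightly cleaner than the paper: deducing smoothness of the $X_\alpha$ from the assertion in Theorem \ref{th_LRR} rather than from the Jacobian criterion, and observing that Lemma \ref{lemma_LRR} and additivity of $\Tt$ make connectedness of the pieces irrelevant (the paper silently relies on the same point).

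There is, however, one genuine gap, and it sits exactly where the paper spends its only nontrivial lemma (Lemma \ref{new_lemma}). Your parenthetical ``the degenerate possibility $f_\alpha \equiv 0$ being absorbed into the first case'' does not work. Suppose $\alpha^d = 1$, $m_\alpha > 0$ and $f_\alpha \equiv 0$, so that $\PP_\alpha \subseteq X$. Treating this as the first case gives $\NN_{X/\PP_\alpha} = \sum_{\beta \neq \alpha} m_\beta (H_\alpha, \beta/\alpha) - (H^d, \alpha^{-d})$ with equivariant scalar $\alpha^{-d} = 1$ on the subtracted summand. But then $\lambda((H^d,1)) = 1 - (H^{-d},1)$ has Chern trace $1 - \ch(H^{-d})$, whose degree-zero part vanishes, so it is nilpotent and not invertible; by multiplicativity of $\lambda$, the virtual bundle $\lambda(\NN_{X/\PP_\alpha})$ is then not invertible either, $\widetilde{\lambda}_y(\NN_{X/\PP_\alpha})$ is undefined, and neither the lemma producing (\ref{F_Z}) nor Corollary \ref{cor_LRR} applies to this component. (That the would-be answer $\Psi_{1,d}^{-1}$ formally agrees with the theorem's summand is a coincidence of expressions, not a derivation.) So the degenerate case must actually be \emph{ruled out}, and this is where smoothness of $X$ enters beyond smoothness of the fixed locus: writing $f = f_\alpha + g$ in an eigenbasis, $\widetilde{\sigma}$-invariance together with $\alpha^d = 1$ forces every monomial of $g$ to involve at least two variables from eigenspaces $V_\beta$ with $\beta \neq \alpha$, so if $f_\alpha = 0$ all partial derivatives of $f$ vanish along the nonempty locus $\PP_\alpha \subseteq X$, contradicting smoothness — this is precisely the content of Lemma \ref{new_lemma}. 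Alternatively, you could have closed the gap with your own opening observation: the invertibility of $\lambda(\NN_{X/X_\sigma})$ asserted by Theorem \ref{th_LRR}, set against the non-invertible factor computed above, already shows the degenerate case cannot occur. But as written you draw neither inference, and your final case analysis ($\alpha^d \neq 1$, or a genuine degree-$d$ hypersurface) simply has no branch that covers it.
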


The fixed locus of the smooth projective pair $\XX = (X, \sigma)$ decomposes as
$$ X_\sigma = \bigsqcup_{\alpha \in \CC^\times} X_\alpha, \qquad X_\alpha = X \cap \PP_\alpha. $$
To prove the theorem, we need to consider the normal bundle of $X_\alpha$ in $X$, which may be described indirectly by considering the following diagram of inclusions.
$$ \xymatrix{ X \ar[r] & \PP \\ X_\alpha \ar[u] \ar[r] & \PP_\alpha \ar[u] } $$
Using the appropriate short exact sequences corresponding to (\ref{normal}), one obtains in $\M(X_\alpha)$ the equality
\begin{equation} \label{sum_normal} \NN_{X/X_\alpha} = \NN_{\PP/\PP_\alpha} |_{X_\alpha} + \NN_{\PP_\alpha/X_\alpha} - \NN_{\PP/X}|_{X_\alpha}. \end{equation}

Concerning $X_\alpha \incl \PP_\alpha$, we have the following dichotomy.

\begin{lemma} \label{new_lemma}

If $m_\alpha > 0$ and $\alpha^d = 1$, then $X_\alpha$ is a smooth hypersurface of degree $d$ in $\PP_\alpha$; in all other cases, $X_\alpha = \PP_\alpha$.

\end{lemma}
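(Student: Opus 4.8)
The plan is to realize $X_\alpha$ explicitly as the zero locus of the restriction of the defining polynomial $f$ to the eigenspace $V_\alpha$, and then to read off both the dichotomy and the smoothness from the weights of the monomials of $f$ under $\widetilde{\sigma}$. First I would choose a basis of $V$ adapted to the eigenspace decomposition $V = \bigoplus_\beta V_\beta$ and write $x_{\beta,i}$ for the resulting linear coordinates, so that $x_{\beta,i} \circ \widetilde{\sigma} = \beta\, x_{\beta,i}$. A monomial of total degree $k_\beta$ in the $V_\beta$-coordinates is then multiplied by $\prod_\beta \beta^{k_\beta}$ under $f \mapsto f \circ \widetilde{\sigma}$; call this scalar its \emph{weight}. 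Since $\widetilde{\sigma}$ was chosen precisely so that $f \circ \widetilde{\sigma} = f$, invariance of $f$ is equivalent to the statement that every monomial occurring in $f$ has weight $1$.

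Next I would identify the restriction. Because $\PP_\alpha = \PP(V_\alpha)$ is the coordinate subspace cut out by $x_{\beta,i} = 0$ for $\beta \neq \alpha$, the polynomial $f|_{V_\alpha}$ is the sum of those monomials of $f$ that involve only the $V_\alpha$-coordinates, and $X_\alpha = \{ f|_{V_\alpha} = 0 \}$. Such a monomial has total degree $d$ in the $V_\alpha$-coordinates, hence weight $\alpha^d$; combined with the invariance constraint this forces $\alpha^d = 1$. Therefore, whenever $\alpha^d \neq 1$ (and, trivially, when $m_\alpha = 0$, in which case $\PP_\alpha = \emptyset$) the restriction $f|_{V_\alpha}$ vanishes identically and $X_\alpha = \PP_\alpha$, disposing of the ``other cases.''

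It remains to treat $m_\alpha > 0$ together with $\alpha^d = 1$, and this is the crux. The key computation is that for every $\beta \neq \alpha$ and every index $i$ the partial derivative $\partial f / \partial x_{\beta,i}$ restricts to $0$ on $V_\alpha$. Indeed, a monomial of this derivative involving only $V_\alpha$-coordinates arises from a monomial of $f$ of the form $x_{\beta,i} \cdot (\text{degree-}(d-1) \text{ monomial in } V_\alpha\text{-coordinates})$, whose weight is $\beta\, \alpha^{d-1}$; invariance forces $\beta \alpha^{d-1} = 1$, that is $\beta = \alpha^{1-d} = \alpha$ since $\alpha^d = 1$, contradicting $\beta \neq \alpha$. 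Consequently, for $v \in V_\alpha$ the full gradient $\nabla f(v)$ has vanishing components outside $V_\alpha^\vee$, while its $V_\alpha$-components agree with those of the gradient of $f|_{V_\alpha}$ (restriction evidently commuting with $\partial / \partial x_{\alpha,i}$). Invoking the Jacobian criterion, valid in characteristic $0$ via Euler's relation, the smoothness of $X$, i.e. the nonvanishing of $\nabla f$ away from the origin, then translates into the nonvanishing of the gradient of $f|_{V_\alpha}$ at every nonzero point of $V_\alpha$. Since $m_\alpha > 0$, this in particular rules out $f|_{V_\alpha} \equiv 0$, so $X_\alpha$ is a genuine degree-$d$ hypersurface in $\PP_\alpha$, and the same criterion shows it is smooth.

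I expect the weight bookkeeping in the last paragraph, precisely the cancellation $\alpha^{1-d} = \alpha$ forced by $\alpha^d = 1$, to be the only substantive point: it is what decouples the off-diagonal directions and reduces the smoothness of $X$ along $\PP_\alpha$ to the smoothness of $X_\alpha$ inside $\PP_\alpha$. Everything else is a direct transcription of the invariance $f \circ \widetilde{\sigma} = f$ into conditions on monomials.
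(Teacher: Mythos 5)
Your proposal is correct and follows essentially the same route as the paper: your weight computation $\beta\,\alpha^{d-1} = \beta/\alpha \neq 1$ (using $\alpha^d = 1$) is precisely the paper's observation that every monomial of $f$ not lying in $f|_{V_\alpha}$ must involve at least two variables from eigenspaces other than $V_\alpha$, and both arguments then apply the Jacobian criterion twice, first to rule out $f|_{V_\alpha} \equiv 0$ when $m_\alpha > 0$ (else all of $\PP_\alpha$ would consist of singular points of $X$), and then to show that any singular point of $X_\alpha$ would be a singular point of $X$. Your phrasing via the vanishing of the partials $\partial f / \partial x_{\beta,i}$ on $V_\alpha$ for $\beta \neq \alpha$ is just the dual formulation of the same monomial bookkeeping.
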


\begin{proof}

We may think of $f$ as being expressed in the coordinates of an eigenbasis of $V$ for $\widetilde{\sigma}$, hence write it as
$$ f = f_\alpha + g, $$
where $f_\alpha$ is the sum of monomials of $f$ involving only variables corresponding to the eigenspace $V_\alpha$, and $g$ comprises the mononomials which involve at least one variable corresponding to some other eigenvalue $\beta \neq \alpha$.

The closed subset $X_\alpha$ of $\PP_\alpha$ is defined by the homogeneous polynomial $f_\alpha = f|_{V_\alpha}$. Note that since we assume that $f$ in $\widetilde{\sigma}$-invariant, we have
$$ f_\alpha = f_\alpha \circ \widetilde{\sigma}|_{V_\alpha} = \alpha^d f_\alpha, $$
hence either $\alpha^d = 1$ or $f_\alpha = 0$. We need to show that both conditions cannot be simultaneously satisfied when $m_\alpha > 0$.

Suppose that $\alpha^d = 1$. Then note that the invariance of $f$ under $\widetilde{\sigma}$ implies that all monomials in $g$ actually involve \emph{at least two} variables corresponding to eigenvalues other that $\alpha$. Using the Jacobian criterion, we then see that if $f_\alpha = 0$, all $\PP_\alpha \neq \varnothing$ occurs as singularities of $X$, which is impossible since $X$ was supposed smooth.

It remains to show that $X_\alpha$ is actually smooth when $f_\alpha \neq 0$. But again, it is easily seen from the Jacobian criterion that all singularities of $X_\alpha$ are also singular points for $X$, hence there can be none (a different proof is given in \cite[4.1]{Donovan}). \end{proof}

We may now proceed with the proof of Theorem \ref{th_main}.

\begin{proof}[Proof of the theorem]

The normal bundle of the linear space $\PP_\alpha$ was described by formula (\ref{normal_PP_alpha}) above; let us now consider the two other terms in (\ref{sum_normal}).

Let $f$ be a defining polynomial for $X$ that is invariant under $\widetilde{\sigma}$. If $\der f: \underline{V} \to \underline{V}$ denotes the vector bundle morphism over $\Aff^{n+1}$ given on the fiber above $x \in V$ by the differential $\der f_x$, then the normal bundle of $X$ in $\PP$ may be identified with
$$ \HH|_X \otimes \big( \underline{V}/\Ker \der f \big) = \HH|_X \otimes \HH^{d-1}|_X = \HH^d|_X. $$
Restricting to $X_\alpha$ and taking the action of $\sigma$ into account, we find
\begin{equation} \label{norm_hyper} \NN_{\PP/X}|_{X_\alpha} = \bigg( \HH^d, \frac{1}{\alpha^d} \bigg). \end{equation}

Now consider the two possibilities for $X_\alpha \incl \PP_\alpha$ described in Lemma \ref{new_lemma}.

\begin{itemize}

\item When $m_\alpha = 0$ or $\alpha^d \neq 1$, then $X_\alpha = \PP_\alpha$, so that (\ref{sum_normal}) reduces to
$$ \NN_{X/X_\alpha} = \NN_{\PP/\PP_\alpha} - \NN_{\PP/X}|_{X_\alpha} = \sum_{\beta \neq \alpha} m_\beta \bigg( \HH_\alpha, \frac{\beta}{\alpha} \bigg) - \bigg( \HH^d, \frac{1}{\alpha^d} \bigg). $$
The contribution of $X_\alpha$ to the right-hand side of the formula for the $y$-characteristic (Corollary \ref{cor_LRR}) is thus
$$ [x^{m_\alpha - 1}] \bigg\{ \Phi_\emptyset(x,y) \cdot \frac{1}{\Psi_{\alpha^d, d}(x,y)} \cdot \prod_{\beta \neq \alpha} \Psi_{\alpha/\beta,1}(x,y)^{m_\beta} \bigg\}, $$
which is precisely what appears in the statement of the theorem.

\item When $m_\alpha > 0$ and $\alpha^d = 1$, then $X_\alpha$ is a smooth hypersurface of degree $d$ inside $\PP_\alpha$. As for (\ref{norm_hyper}), we have
$$ \NN_{\PP_\alpha/X_\alpha} = \bigg( \HH^d, \frac{1}{\alpha^d} \bigg) = \big( \HH^d, 1 \big). $$

The contribution of $X_\alpha$ it thus this time
$$ [x^{m_\alpha - 1}] \bigg\{ \Phi_d(x,y) \cdot \frac{1}{\Psi_{1, d}(x,y)} \cdot \prod_{\beta \neq \alpha} \Psi_{\alpha/\beta,1}(x,y)^{m_\beta} \cdot \Psi_{1,d}(x,y) \bigg\}. $$

\end{itemize}

Using (\ref{Phi_d}), we see that the contributions of $X_\alpha$ in both cases agree with the announced formula. \end{proof}

It should be noted that the formula of Theorem \ref{th_main} expresses the trace of the action of $\sigma$ on the cohomology of $X$ in terms only of the degree of $X$ and the spectrum of $\widetilde{\sigma}$.

\subsection*{Traces on primitive cohomology}

Let $\chi_y^\prim(X,\sigma)$ denote the formal alternating trace of the morphism induced by $\sigma$ on the primitive cohomology of a smooth hypersurface $X$ of dimension $n$, i.e.,
$$ \chi_y^\prim(X,\sigma) = \sum_{p + q = n} (-1)^p \, \tr \big(\sigma^*, \h^{p,q}_\prim(X,\CC) \big) \,  y^p. $$

\begin{corollary} \label{cor_prim}

With the notations of Theorem \ref{th_main}, the formal trace $\chi_y^\prim(X,\sigma)$ of $\sigma$ on the primitive cohomology of $X$ is given by
$$  y^{n+1} + \\ (-1)^n \sum_{\alpha \in \CC^\times} [x^{m_\alpha-1}] \bigg\{ \frac{1}{(1 + x y) (1 - x)} \left( \frac{1}{\Psi_{\alpha^d, d}(x,y)} - 1 \right) \prod_{\beta \neq \alpha} \Psi_{\alpha/\beta,1}(x,y)^{m_\beta} \bigg\} . $$

\end{corollary}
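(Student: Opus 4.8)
The plan is to obtain the primitive characteristic as the difference between the full Lefschetz characteristic $\chi_y(X,\sigma)$ of Theorem \ref{th_main} and the contribution of the \emph{ambient} cohomology inherited from $\PP$, the latter being controlled by Lemma \ref{lemma_P}. Everything then reduces to combining the two generating series termwise.

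First I would invoke the Lefschetz hyperplane theorem: for a smooth hypersurface of dimension $n$ the restriction $\H^k(\PP) \To \H^k(X)$ is an isomorphism for $k < n$, and by Poincar\'e duality the only cohomology of $X$ not inherited from $\PP$ is the primitive part $\H^n_\prim(X)$ in middle degree. Thus $\H^\gr(X)$ splits as $\H^\gr_{\mathrm{amb}}(X) \oplus \H^n_\prim(X)$, where the ambient part is spanned by the restrictions $h^p|_X$ for $0 \le p \le n$, each of Hodge type $(p,p)$. Since $\sigma \in \PGL(V)$ fixes the hyperplane class, it acts trivially on every $h^p|_X$, so the ambient part contributes $\sum_{p=0}^n (-y)^p$ to the formal Lefschetz characteristic. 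By Lemma \ref{lemma_P} this equals $\chi_y(\PP,\sigma) - (-y)^{n+1}$, the correction being exactly the top class $h^{n+1}$ of $\PP^{n+1}$, which restricts to zero on the $n$-dimensional $X$.

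Next I would isolate the primitive piece. Because the primitive cohomology sits entirely in degree $n$, on a Hodge component $\H^{p,q}_\prim(X)$ with $p+q=n$ the sign $(-1)^q$ built into $\chi_y$ equals $(-1)^n (-1)^p$, so the primitive contribution to $\chi_y(X,\sigma)$ is precisely $(-1)^n \chi_y^\prim(X,\sigma)$. Subtracting the ambient contribution from the total then gives
$$ (-1)^n \chi_y^\prim(X,\sigma) = \chi_y(X,\sigma) - \chi_y(\PP,\sigma) + (-y)^{n+1}. $$
Finally I would substitute the generating-series expressions. Both $\chi_y(X,\sigma)$ (Theorem \ref{th_main}) and $\chi_y(\PP,\sigma)$ (Lemma \ref{lemma_P}) are sums over $\alpha$ of coefficients sharing the common factor $\frac{1}{(1+xy)(1-x)} \prod_{\beta \ne \alpha} \Psi_{\alpha/\beta,1}(x,y)^{m_\beta}$, the only difference being the extra factor $1/\Psi_{\alpha^d,d}(x,y)$ present for $X$ and absent (i.e. replaced by $1$) for $\PP$. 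Combining the two $\alpha$-sums termwise produces the factor $\big( \tfrac{1}{\Psi_{\alpha^d,d}} - 1 \big)$, and multiplying through by $(-1)^n$ yields the stated formula, the isolated top class giving the explicit $y^{n+1}$ summand.

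The main obstacle is purely the sign-and-constant bookkeeping at the two ends of the complex: correctly extracting the global factor $(-1)^n$ coming from the middle-degree Hodge pieces, and correctly accounting for the single top class $h^{n+1}$ of $\PP$ that disappears on $X$, which is what produces the lone $y^{n+1}$ term. The formal manipulation of the series $\Phi_\dd$ and $\Psi_{\gamma,e}$ is routine and already licensed by Corollary \ref{cor_LRR}; I would pin down the sign of the $y^{n+1}$ contribution unambiguously with a one-line sanity check on a smooth plane curve ($n=1$, $\sigma = 1$), where the formula must reduce to $g\,(1-y)$ with $g = \binom{d-1}{2}$.
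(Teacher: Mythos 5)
Your proposal follows the paper's own route step for step: Lefschetz hyperplane theorem to split off the ambient cohomology (which $\sigma$ fixes, since it preserves the hyperplane class), the identity relating $(-1)^n \chi_y^\prim(X,\sigma)$ to $\chi_y(X,\sigma) - \chi_y(\PP,\sigma)$ plus a top-class correction, and then the termwise combination of Theorem \ref{th_main} with Lemma \ref{lemma_P}, which is exactly how the factor $\big( \frac{1}{\Psi_{\alpha^d,d}} - 1 \big)$ arises in the paper as well. So in substance this is the same proof, and your version is correct.

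One point, however, deserves to be made explicit rather than elided. Your intermediate identity, with the correction written as $(-y)^{n+1}$, is the right one: since $\chi_y(\PP^m) = \sum_{p=0}^m (-y)^p$, one has $\chi_y(\PP^n) = \chi_y(\PP) - (-y)^{n+1}$, and here you are actually more careful than the paper, whose proof writes $+\,y^{n+1}$ (valid only for $n$ odd). But your closing claim that multiplying through by $(-1)^n$ \emph{yields the stated formula} is then not accurate: your equation gives $\chi_y^\prim(X,\sigma) = -\,y^{n+1} + (-1)^n \sum_\alpha [\cdots]$, which differs from the printed statement in the sign of the $y^{n+1}$ term for every $n$. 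The sanity check you propose settles the matter in your favor: for a smooth plane curve with $\sigma = 1$, the $\alpha$-sum equals $-g(1-y) - y^2$, so the statement as printed would give $g(1-y) + 2y^2$, while your version gives the correct $g(1-y)$. The $-y^{n+1}$ version is also the one consistent with Corollary \ref{cor_nice} upon specializing at $y = -1$ (the printed sign would introduce a spurious $2(-1)^{n+1}$ into the trace formula). So you should state plainly that your derivation corrects a sign slip in the corollary as printed, and in the corresponding line of the paper's proof, rather than assert agreement with it.
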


\begin{proof}

The Lefschetz hyperplane theorem implies that
$$ \chi_y(X,\sigma) = \chi_y(\PP^n) + \chi_y^\prim(X,\sigma). $$
But since $ \chi_y(\PP^n) = \chi_y(\PP) - y^{n+1}, $
this may be rewritten as
$$ (-1)^n \chi_y^\prim(X,\sigma) = \chi_y(X,\sigma) - \chi_y(\PP,\sigma) + y^{n+1}, $$
and the conclusion follows by using the expressions for $\chi_y(\PP,\sigma)$ and $\chi_y(X,\sigma)$ given by Lemma \ref{lemma_P} and Theorem \ref{th_main}, respectively. \end{proof}

Specializing this formula at $y = -1$ yields a formula for the trace of $\sigma$ on the whole primitive cohomology of $X$. Note that this formula could also have been obtained using the ``classical'' Lefschetz formula in terms of the Euler characteristic of the fixed locus,
$$ \chi(X,\sigma) = \chi(X_\sigma) = \sum_\alpha \chi(X_\alpha), $$
and the description of $X_\alpha$ given by Lemma \ref{new_lemma}.

\begin{corollary} \label{cor_nice}

If $X$ is a smooth degree $d$ projective hypersurface of dimension $n$ stable under the action of a projective transformation $\sigma$ of finite order, we have
$$ \tr(\sigma^*, \h^n_\prim(X,\CC)) = \frac{(-1)^n}{d} \sum_{\alpha^d = 1} (1 - d)^{m_\alpha}, $$
where $m_\alpha$ is the multiplicity of $\alpha$ as an eigenvalue of the linear representative of $\sigma$ which leaves invariant a defining polynomial for $X$.

\end{corollary}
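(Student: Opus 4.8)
The plan is to specialize the formula of Corollary \ref{cor_prim} at $y = -1$, since $\tr(\sigma^*, \H^n_\prim(X,\CC))$ is precisely $\chi^\prim_{-1}(X,\sigma)$ up to the sign convention built into $\chi_y^\prim$. First I would observe that at $y = -1$ the explicit term $y^{n+1}$ contributes $(-1)^{n+1}$, and more importantly that the product $\prod_{\beta \neq \alpha} \Psi_{\alpha/\beta,1}(x,-1)^{m_\beta}$ and the prefactor $\frac{1}{(1+xy)(1-x)}$ simplify dramatically. The key computation is evaluating $\Psi_{\gamma,e}(x,-1)$: from the definition $\Psi_{\gamma,e}(x,y) = \frac{\gamma(1+xy)^e + y(1-x)^e}{\gamma(1+xy)^e - (1-x)^e}$, setting $y = -1$ gives $\Psi_{\gamma,e}(x,-1) = \frac{\gamma(1-x)^e - (1-x)^e}{\gamma(1-x)^e - (1-x)^e} = 1$ whenever $\gamma \neq 1$, because numerator and denominator both become $(\gamma - 1)(1-x)^e$. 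This means every factor $\Psi_{\alpha/\beta,1}(x,-1)^{m_\beta}$ equals $1$ (since $\beta \neq \alpha$ forces $\alpha/\beta \neq 1$), so the entire product collapses to $1$.

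Next I would analyze the remaining factor $\frac{1}{(1+xy)(1-x)}\left(\frac{1}{\Psi_{\alpha^d,d}(x,y)} - 1\right)$ at $y = -1$. At $y = -1$ we have $(1+xy) = (1-x)$, so the prefactor is $\frac{1}{(1-x)^2}$. For the bracket, the interesting regime is exactly $\alpha^d = 1$, since for $\alpha^d \neq 1$ the same cancellation as above forces $\Psi_{\alpha^d,d}(x,-1) = 1$ and the bracket vanishes, killing those terms. So only the eigenvalues $\alpha$ with $\alpha^d = 1$ survive. For such $\alpha$, setting $\gamma = \alpha^d = 1$ and $y = -1$ in $\Psi_{1,d}$, I expect to compute the limit carefully: $\Psi_{1,d}(x,y) = \frac{(1+xy)^d - (1-x)^d}{(1+xy)^d + y(1-x)^d}$, and at $y=-1$ both $(1+xy)^d \to (1-x)^d$, so numerator $\to 0$ and denominator $\to (1-x)^d - (1-x)^d = 0$ as well; this is a $0/0$ indeterminacy that must be resolved by expanding to first order in $(1+y)$ or by treating $\Phi$ as a power series in $x$ and extracting coefficients.

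The main obstacle is therefore this $0/0$ evaluation of $\frac{1}{\Psi_{1,d}(x,-1)} - 1$, together with the coefficient extraction $[x^{m_\alpha - 1}]$. I anticipate two workable routes. The cleaner route avoids the limit entirely: as the remark following Corollary \ref{cor_prim} points out, the trace at $y=-1$ can be recomputed directly from the classical topological Lefschetz formula $\chi(X,\sigma) = \chi(X_\sigma) = \sum_\alpha \chi(X_\alpha)$ using the description of $X_\alpha$ in Lemma \ref{new_lemma}, and then subtracting the ambient contribution $\chi(\PP^n)$. Concretely, each $X_\alpha$ with $\alpha^d = 1$ is a smooth degree-$d$ hypersurface in $\PP(V_\alpha) = \PP^{m_\alpha - 1}$, whose Euler characteristic is a standard polynomial in $d$ and $m_\alpha$ (obtainable from the degree-$d$ hypersurface Euler characteristic formula, or equivalently from Corollary \ref{cor_LRR} applied to projective space), while the $X_\alpha$ with $\alpha^d \neq 1$ equal $\PP_\alpha$ and contribute their full Euler characteristic. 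Taking the primitive part and bookkeeping the signs should assemble directly into $\frac{(-1)^n}{d}\sum_{\alpha^d=1}(1-d)^{m_\alpha}$.

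Alternatively, I would push through the generating-series route: resolve the $0/0$ by writing $\frac{1}{\Psi_{1,d}(x,y)} = \frac{(1+xy)^d + y(1-x)^d}{(1+xy)^d - (1-x)^d}$, subtract $1$ to get $\frac{(1+y)(1-x)^d}{(1+xy)^d - (1-x)^d}$, and then at $y = -1$ the factor $(1+y)$ in the numerator cancels against a factor of $(1+y)$ lurking in the denominator after a Taylor expansion of $(1+xy)^d - (1-x)^d$ around $y = -1$. This should leave a clean rational function of $x$ whose $[x^{m_\alpha-1}]$ coefficient, once multiplied by $\frac{1}{(1-x)^2}$, yields a term of the form $\frac{1}{d}(1-d)^{m_\alpha}$. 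Matching the overall sign $(-1)^n$ and confirming that the stray $y^{n+1}$ term and the non-root eigenvalue terms all vanish or reorganize correctly is the final bookkeeping step. I expect the first route to be shorter and less error-prone, so I would present that as the main argument and relegate the generating-series verification to a remark.
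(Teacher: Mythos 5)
Your proposal is correct, and it in fact contains the paper's proof as its ``backup'' route: the paper dispatches the corollary in one sentence by specializing Corollary \ref{cor_prim} at $y = -1$, using exactly your two observations --- that $\Psi_{\gamma,e}(x,-1) = 1$ for $\gamma \neq 1$, and a formal L'Hospital evaluation $\Psi_{1,e}(x,-1) = \frac{1 + (e-1)x}{ex}$, which resolves the $0/0$ in the same way as your cancellation of the factor $(1+y)$. Your designated \emph{main} argument (the classical Lefschetz formula $\chi(X,\sigma) = \chi(X_\sigma) = \sum_\alpha \chi(X_\alpha)$ combined with Lemma \ref{new_lemma}, which identifies each $X_\alpha$ as either $\PP_\alpha$ or a smooth degree-$d$ hypersurface in $\PP^{m_\alpha - 1}$) is precisely the alternative the paper records in the remark immediately preceding the corollary, and it does assemble correctly: each $X_\alpha$ with $\alpha^d = 1$ contributes $m_\alpha - \frac{1}{d} + \frac{(1-d)^{m_\alpha}}{d}$ (uniformly valid, e.g.\ it gives $0$ when $m_\alpha = 1$ and $X_\alpha = \emptyset$), each $\alpha$ with $\alpha^d \neq 1$ contributes $m_\alpha$, and after subtracting $\chi(\PP^n) = n+1$ and using $\sum_\alpha m_\alpha = n+2$, the leftover constant $1 - \frac{k}{d}$ (with $k$ the number of eigenvalues among the $d$-th roots of unity) is exactly $\frac{1}{d}$ times the number of roots of unity with $m_\alpha = 0$, each of which enters the final sum as $(1-d)^0 = 1$ --- a bookkeeping point your sketch leaves implicit but which is essential to pass from a sum over eigenvalues to the sum over \emph{all} $\alpha$ with $\alpha^d = 1$. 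One transcription slip in your generating-series route should be flagged: the identity you display for $\frac{1}{\Psi_{1,d}(x,y)}$ is actually the formula for $\Psi_{1,d}(x,y)$ itself, so the fraction $\frac{(1+y)(1-x)^d}{(1+xy)^d - (1-x)^d}$ you derive is $\Psi_{1,d} - 1$ rather than $\frac{1}{\Psi_{1,d}} - 1$; the correct expression is $\frac{1}{\Psi_{1,d}} - 1 = \frac{-(1+y)(1-x)^d}{(1+xy)^d + y(1-x)^d}$, which at $y = -1$ gives $\frac{x-1}{1+(d-1)x}$, whence $[x^{m_\alpha - 1}]\left\{ \frac{-1}{(1-x)(1+(d-1)x)} \right\} = \frac{(1-d)^{m_\alpha} - 1}{d}$; carrying your uncorrected formula through would produce wrong coefficients, but since the $(1+y)$-cancellation mechanism is identical for the correct fraction, this is a slip rather than a gap.
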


The proof is an easy exercise in generating series using the following fact: if $\gamma \neq 1$, we have $\Psi_{\gamma,e}(x,-1) = 1$, while using the formal L'Hospital rule we find
$$ \Psi_{1,e}(x,-1) =  \frac{1 + (e - 1) x}{e x}. $$

Note that for $\sigma = 1$, we recover from Corollary \ref{cor_nice} the well-known formula for the dimension of the primitive cohomology
$$ \dim \h^n_\prim(X,\CC) = \frac{(d-1)^{n+2} + (-1)^n (d-1)}{d}. $$

\subsection*{Smooth symmetric hypersurfaces}

For $n$ a fixed integer, consider the standard permutation representation of the symmetric group $S_{n+2}$ on $V = \CC^{n+2}$. For every $\sigma \in S_{n+2}$, the characteristic polynomial of $\sigma$ acting on $V$ is $ \prod_i (x^{\lambda_i} - 1) $ if the $\lambda_i$ are the lengths of the cycles in the disjoint cycle decomposition of $\sigma$. If $\alpha \in \CC^\times$ is a primitive $e^\ieme$ root of unity, it follows that $\alpha$ occurs as an eigenvalue of $\sigma$ exactly once for every cycle whose length $\lambda_i$ is divisible by $e$. Accordingly, let
$ m_e(\sigma)$ denote the number of cycles of $\sigma$ whose length is divisible by $e$.

Let $X$ be a smooth hypersurface of degree $d$ in $H = \PP(V)$ that is stable under the projective action of $S_{n+2}$ (e.g., the Fermat hypersurface of degree $d$). It is easily checked that for $n \geq 1$, any such hypersurface is actually defined by a \emph{symmetric} polynomial; hence no confusion should arise if we use the same notation $\sigma$ for both the projective and the linear transformation corresponding to a permutation $\sigma \in S_n$. According to Corollary \ref{cor_nice}, the character of $S_{n+2}$ acting on the primitive cohomology of $X$ is
\begin{equation} \label{type_I} \chi(\sigma) = \frac{(-1)^n}{d}  \sum_{e \mid d} \varphi(e)(1 - d)^{m_e(\sigma)}, \end{equation}
where $\varphi(e)$ is the Euler totient function.

Now considering $V$ as the affine hyperplane $x_1 + \cdots + x_{n+3} = 0$ in $\CC^{n+3}$, we see that it admits an action by $S_{n+3}$ compatible with the action of $S_{n+2}$ described above; this is the standard \emph{irreducible} permutation representation of the symmetric group. As above, it is still the case that any smooth $S_{n+3}$-invariant hypersurface in $H$ is defined by an invariant polynomial. In fact, every such hypersurface occurs as the intersection of $H$ with a (possibly singular) symmetric hypersurface in $\PP^{n+2}$ (see \cite[3.2.8]{thesis}). Consequently, if a smooth $n$-dimensional degree $d$ hypersurface $X$ is stable under this action, it follows from Corollary \ref{cor_nice} applied to $X \subset H$ that the character of $S_{n+3}$ on its primitive cohomology is given by
\begin{equation} \label{type_II} \psi(\sigma) = \frac{(-1)^{n+1}}{d}  \sum_{e \mid d} \varphi(e)(1 - d)^{m'_e(\sigma)}, \end{equation}
where $m'_e(\sigma)$ is a slight variation on $m_e(\sigma)$ defined as
$$ m'_e(\sigma) \df \begin{cases} \ \ m_e(\sigma) & \text{for } e > 1, \\ m_1(\sigma) - 1 & \text{for } e = 1. \end{cases} $$

Comparing (\ref{type_I}) and (\ref{type_II}), we obtain the following result.

\begin{theorem} \label{th_existence}

There exists a smooth projective hypersurface of dimension $n \geq 1$ and degree $d \geq 2$ stable under the standard irreducible permutation action of $S_{n+3}$ if and only if $n+3$ and $d - 1$ are coprime.

\end{theorem}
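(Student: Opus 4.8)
The plan is to translate the geometric existence question into the representation-theoretic criterion of Theorem \ref{coprime}. If a smooth hypersurface $X$ stable under the irreducible action of $S_{n+3}$ exists, then $\H^n_\prim(X,\CC)$ is a genuine (not merely virtual) $S_{n+3}$-representation, so the class function $\psi$ of (\ref{type_II}) must be an honest character. The heart of the argument is to show that $\psi$ is a genuine character precisely when $\sigma \mapsto (d-1)^{m(\sigma)-1}$ is, so that Theorem \ref{coprime} (with $\ell = d-1$, $N = n+3$) supplies the coprimality condition.

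For the necessity (``only if'') direction I would compare the two character formulas directly, as the surrounding text suggests. Applying the type-I formula (\ref{type_I}) to the Fermat hypersurface of dimension $n+1$ in $\PP^{n+2}$ --- which is smooth and $S_{n+3}$-symmetric, hence yields a genuine character $\chi'$ of $S_{n+3}$ --- I observe that the resulting expression and (\ref{type_II}) have identical summands for every divisor $e > 1$ of $d$ and differ only in the $e = 1$ term, where the exponent $m(\sigma)$ is replaced by $m(\sigma) - 1$. Since the two formulas carry opposite overall signs, forming $\psi + \chi'$ cancels every contribution with $e > 1$ and collapses the $e = 1$ terms (the factor $1/d$ disappearing) to a sign-twist $\sg \cdot (d-1)^{\,m(\sigma)-1}$, using $(1-d)^{m(\sigma)-1} = \sg(\sigma)(d-1)^{m(\sigma)-1}$ with $\sg(\sigma) = (-1)^{(n+3)-m(\sigma)}$. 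Thus if $\psi$ is genuine then $\sg \cdot (d-1)^{m(\cdot)-1} = \psi + \chi'$ is a sum of genuine characters, hence genuine; untwisting by the sign character shows $\sigma \mapsto (d-1)^{m(\sigma)-1}$ is genuine, and Theorem \ref{coprime} forces $(n+3, d-1) = 1$. Contrapositively, when $n+3$ and $d-1$ share a common factor, $\psi$ cannot be a genuine character, so no smooth $S_{n+3}$-stable hypersurface can exist; this is the obstruction advertised in the introduction.

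The converse (``if'') direction is geometric and is where the real work lies: assuming $(n+3, d-1) = 1$ I must actually exhibit a smooth $S_{n+3}$-stable hypersurface. By Lemma \ref{lemma_tech} this amounts to producing a symmetric degree-$d$ form whose restriction to the hyperplane $\{x_1 + \cdots + x_{n+3} = 0\}$ defines a smooth hypersurface, and I would argue that a general such form works by a Bertini-type argument. The plan is to stratify $\PP(V)$ by the fixed loci of cyclic subgroups of $S_{n+3}$: a general member of the symmetric linear system is smooth away from its base locus, and the only potentially forced singular points are the highly symmetric ones, where the Jacobian criterion reduces to a vanishing-sum-of-$(d-1)$-th-roots-of-unity condition on the coordinate values. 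The coprimality of $n+3$ with $d-1$ is exactly what excludes such configurations compatible with the full symmetry, so a general symmetric hypersurface is smooth.

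I expect this last step --- converting the numerical condition $(n+3,d-1)=1$ into an actual smooth member --- to be the main obstacle, since the character computation produces the obstruction but cannot by itself manufacture a variety. Controlling the singularities along \emph{every} symmetric stratum, rather than merely at the generic point of the linear system, is the delicate part, and it is here that I would expect the bulk of the geometric estimates to be concentrated.
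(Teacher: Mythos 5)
Your ``only if'' half is, in substance, exactly the paper's argument: the paper likewise uses the fact that (\ref{type_I}), applied to $S_{n+3}$ acting on a smooth symmetric hypersurface of dimension $n+1$ in $\PP^{n+2}$ (the Fermat hypersurface, which is always smooth there), is a genuine character, adds it to $\psi$ of (\ref{type_II}), observes that every $e > 1$ summand cancels while the $e = 1$ terms collapse with the $1/d$ factor to give $\chi(\sigma) + \psi(\sigma) = \sg(\sigma)\,(d-1)^{m_1(\sigma)-1}$, and then invokes Theorem \ref{coprime}. Your sign bookkeeping and the untwisting by $\sg$ (harmless, since tensoring with a linear character preserves genuine characters) are correct, and this half of your proposal is fine.

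The genuine gap is the ``if'' half, which you leave as an unexecuted plan while acknowledging that ``the real work lies'' there. An existence statement needs an actual smooth member, and the paper produces one with no Bertini machinery at all: when $(n+3, d-1) = 1$ it simply checks by the Jacobian criterion that the section of the Fermat hypersurface $x_1^d + \cdots + x_{n+3}^d = 0$ by the hyperplane $x_1 + \cdots + x_{n+3} = 0$ is smooth. Your stratification-and-genericity program is never carried out, so existence is not established; worse, its pivotal numerical claim --- that coprimality of $n+3$ with $d-1$ ``is exactly what excludes'' vanishing sums of $n+3$ roots of unity of order dividing $d-1$ --- is false as stated. By a theorem of Lam and Leung, vanishing sums of $N$ many $m^{\th}$ roots of unity exist precisely when $N$ is a nonnegative integer combination of the primes dividing $m$; thus for $n+3 = 5$, $d-1 = 6$ one has $1 + (-1) + 1 + \omega + \omega^2 = 0$ with $\omega$ a primitive cube root of unity, even though $(5,6) = 1$, and for $d = 7$, $n = 2$ the point $[1:-1:1:\omega:\omega^2]$ is in fact a singular point of the Fermat section. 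So your sketched route cannot be completed as written whenever $d-1$ has at least two prime factors (a caveat which, it should be said, also touches the paper's own ``readily checked'' assertion: the Fermat verification is airtight exactly when $d-1$ is a prime power, where vanishing sums of length $N$ exist iff $p \mid N$). Concretely, the missing idea relative to the paper is the explicit Fermat example in place of a generic-smoothness argument, and the failing step in your own route is the asserted equivalence between coprimality and the nonexistence of vanishing root-of-unity sums.
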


\begin{proof}

If $(n+3, d-1) = 1$, it is readily checked using the Jacobian criterion that the intersection of the Fermat hypersurface of degree $d$ in $\PP^{n+2}$ with $\PP^{n+1}$ is smooth; hence one part of the statement is clear. Now suppose that $X$ is a smooth $S_{n+3}$-stable hypersurface in $\PP^{n+1}$; the character of the representation of $S_{n+3}$ on its primitive cohomology is given by \ref{type_II}. But since (\ref{type_I}) also describes a character of $S_{n+3}$, it follows that
$$ \chi(\sigma) + \psi(\sigma) = (-1)^{n+1} (1 - d)^{m_1(\sigma) - 1} = \sg(\sigma) \, (d - 1)^{m_1(\sigma) - 1} $$
also is a character of $S_{n+3}$. But this is possible only if $(n+3, d-1) = 1$, as will be proven in the next section. \end{proof}

\section{A representation-theoretic fact}

This section is devoted to a proof of the following fact.

\begin{theorem} \label{coprime} For $n$ and $\ell$ positive integers, the class function $\sigma \mapsto \ell^{m_1(\sigma) - 1}$ on $S_n$ is a character if and only if $n$ and $\ell$ are coprime. \end{theorem}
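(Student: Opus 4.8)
The plan is to realize the class function $\theta(\sigma) \df \ell^{m_1(\sigma)-1}$ as $\tfrac1\ell$ times a transparent permutation character and then let elementary number theory decide when the division by $\ell$ is legitimate. The starting point is that $\sigma \mapsto \ell^{m_1(\sigma)}$ is itself an honest character: letting $S_n$ act on the set $\Omega \df (\ZZ/\ell\ZZ)^{\{1,\dots,n\}}$ of functions by permuting the source, the number of $\sigma$-fixed functions is $\ell^{m_1(\sigma)}$, since a fixed function is exactly one constant along each cycle of $\sigma$. The crucial extra structure is that $\ZZ/\ell\ZZ$ acts on $\Omega$ by translation of values, $(t\cdot f)(i) = f(i)+t$, that this action is free for $n\geq 1$, and that it commutes with $S_n$. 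Thus $\theta$ is precisely $\tfrac1\ell$ times the permutation character of a free $\ZZ/\ell\ZZ$-action, and the whole question becomes whether this average is again a character.

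For the ``if'' direction I would form the genuine $S_n$-representation $P \df \CC[\Omega]^{\ZZ/\ell\ZZ}$ of translation-invariants (equivalently, the permutation module on the set of translation orbits) and show $\chi_P = \theta$ when $(n,\ell)=1$. Averaging over the commuting translation action gives
$$ \chi_P(\sigma) = \frac1\ell \sum_{t \in \ZZ/\ell\ZZ} \#\{\, f \in \Omega : f(\sigma^{-1} i) + t = f(i) \ \text{for all } i \,\}. $$
Going around a cycle of length $L$ forces $Lt \equiv 0 \pmod \ell$, so the inner count is $\ell^{m_1(\sigma)}$ when $L t \equiv 0 \pmod{\ell}$ for every cycle length $L$ of $\sigma$, and $0$ otherwise. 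The heart of this direction is the elementary claim that, when $(n,\ell)=1$, only $t=0$ survives: if $t \not\equiv 0 \pmod \ell$, pick a prime power $p^a \| \ell$ with $t \not\equiv 0 \pmod{p^a}$; since the prime $p \mid \ell$ cannot divide $n = \sum L$, some cycle length $L$ is prime to $p$, whence $L t \not\equiv 0 \pmod{p^a}$ and a fortiori $Lt \not\equiv 0 \pmod \ell$. Thus the sum collapses to its $t=0$ term and $\chi_P(\sigma) = \tfrac1\ell \ell^{m_1(\sigma)} = \theta(\sigma)$, exhibiting $\theta$ as the character of $P$.

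For the ``only if'' direction I would argue by contradiction using the standard congruence satisfied by every integer-valued (virtual) character, $\chi(\sigma^p) \equiv \chi(\sigma) \pmod p$ for each prime $p$, which comes from $\big(\sum \eps_i\big)^p \equiv \sum \eps_i^p$ in $\ZZ[\zeta]$ together with Fermat's little theorem. Suppose $(n,\ell) > 1$ and choose a prime $p$ dividing both $n$ and $\ell$. Taking $\sigma$ an $n$-cycle, its $p$-th power is a product of $p$ cycles of length $n/p$, so $m_1(\sigma)=1$ while $m_1(\sigma^p)=p$. If $\theta$ were a character the congruence would give $\ell^{p-1} = \theta(\sigma^p) \equiv \theta(\sigma) = 1 \pmod p$; but $p \mid \ell$ and $p-1 \geq 1$ force $\ell^{p-1} \equiv 0 \pmod p$, a contradiction.

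The main obstacle is locating the right model in the ``if'' direction: once $\theta$ is recognized as the average of the free translation action on $\CC[\Omega]$, everything reduces to the prime-power bookkeeping showing that coprimality of $n$ and $\ell$ annihilates every nonzero translation. I expect the delicate point to be exactly this collapse of the Burnside sum (and, dually, identifying in the failing case which translations $t \neq 0$ persist), whereas the converse is a short application of a classical character congruence. A useful cross-check is $\sigma = 1$, where $\theta(1) = \ell^{n-1}$ correctly records $\dim P = |\Omega|/\ell$.
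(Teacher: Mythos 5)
Your proof is correct, and while your ``if'' half is secretly the paper's construction in different coordinates, your ``only if'' half takes a genuinely different and shorter route. For ``if'': the paper takes a finite abelian group $A$ of order $\ell$, restricts the permutation module $\CC[A^n]$ to the sum-zero subgroup $M$, and counts fixed points in $M$ directly via Smith normal form, obtaining $\chi_{\CC[M]}(\sigma) = d_A(\sigma)\,\ell^{m_1(\sigma)-1}$ where $d_A(\sigma) = \#\Ker\big(d(\sigma)\colon A \to A\big)$ and $d(\sigma)$ is the gcd of the cycle lengths; coprimality forces $d_A(\sigma) = 1$ because $d(\sigma) \mid n$. Your module $P = \CC[\Omega]^{\ZZ/\ell\ZZ}$ is the permutation module on translation orbits, and when $(n,\ell) = 1$ each orbit contains a unique sum-zero function (solve $nt \equiv -\sum_i f(i) \bmod \ell$, using that $n$ is invertible), so $P \iso \CC[M]$ as $S_n$-modules: your Burnside average over $t$, with the collapse to the $t = 0$ term, is equivalent bookkeeping for the paper's fixed-point count, and your prime-power argument for the collapse is the same arithmetic as the paper's observation that $d(\sigma)$ divides $n$. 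The real divergence is in ``only if'': the paper uses that characters form a ring to get $\widetilde{\theta}_{n,\ell}^{\ \alpha} = \widetilde{\theta}_{n,\ell^\alpha}$, extracts the multiplicity $\binom{n+\ell^\alpha-1}{n}$ of the trivial representation, and shows by a $p$-adic valuation computation that this fails to be divisible by $\ell^\alpha$ for suitable large $\alpha$ when $(n,\ell) > 1$; you instead evaluate the classical congruence $\chi(\sigma^p) \equiv \chi(\sigma) \pmod{p}$ at a single $n$-cycle, where $m_1(\sigma) = 1$ and $m_1(\sigma^p) = p$, getting the contradiction $0 \equiv \ell^{p-1} \equiv 1 \pmod{p}$. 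Your version is more economical --- one prime, one conjugacy class, no asymptotics in $\alpha$, no multiplicity computations --- at the cost of invoking the character congruence, which you should prove or cite precisely; your sketch via the freshman's dream in $\ZZ[\zeta]$, Fermat's little theorem, and $p\ZZ[\zeta] \cap \ZZ = p\ZZ$ does close that gap. What the paper's longer route buys in exchange are by-products your argument does not yield: the Kostka-number decomposition of $\theta_{n,\ell}$ into Schur modules and explicit multiplicities such as $(\sg, \theta_{n,\ell}) = \binom{\ell}{n}$.
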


\subsection*{An abelian group construction}

Let $A$ be a finite abelian group of order $\ell$, and consider the $\ZZ$-linear representation of $S_n$ on $A^n$ by permutation of coordinates. Let $M$ be the subgroup of $A^n$ where the sum of the coordinates is $0$; we have a short exact sequence of $\ZZ[S_n]$-modules \begin{equation} \label{sex} 0 \To M \To A^n \stackrel{\Sigma}{\To} A \To 0, \end{equation}
where $A$ has the trivial action. For $\sigma \in S_n$, let $d(\sigma)$ denote the greatest common divisor of the lengths of its disjoint cycles and $d_A(\sigma)$ the cardinality of the kernel of the multiplication map
$ d(\sigma): A \to A. $

\begin{proposition} \label{prop_M} The character of the linear representation $\CC[M]$ is given for $\sigma \in S_n$ by
$$ \chi_M(\sigma) = d_A(\sigma) \cdot \ell^{m_1(\sigma) - 1}. $$

\end{proposition}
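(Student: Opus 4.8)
The plan is to recognize $\chi_M$ as the permutation character of $S_n$ acting on the finite set $M$: since $\CC[M]$ is the permutation module on $M$, the value $\chi_M(\sigma)$ is simply the number $|M^\sigma|$ of elements of $M$ fixed by $\sigma$. The whole proof then reduces to counting these fixed points explicitly.

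First I would describe the fixed points of $\sigma$ on the ambient module $A^n$. An element $(x_1, \ldots, x_n)$ is fixed by $\sigma$ precisely when it is constant along each orbit of $\langle \sigma \rangle$ on $\{1, \ldots, n\}$, that is, along each cycle of $\sigma$; choosing the common value on each of the $m_1(\sigma)$ cycles identifies $(A^n)^\sigma$ with $A^{m_1(\sigma)}$. If $\lambda_1, \ldots, \lambda_{m_1(\sigma)}$ are the corresponding cycle lengths, the sum map $\Sigma$ of the short exact sequence (\ref{sex}) sends a fixed point with values $a_1, \ldots, a_{m_1(\sigma)}$ on the cycles to $\sum_j \lambda_j a_j \in A$. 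Hence $M^\sigma = (A^n)^\sigma \cap M$ is exactly the kernel of the group homomorphism
$$ \phi : A^{m_1(\sigma)} \To A, \qquad (a_j)_j \mapsto \sum_j \lambda_j a_j. $$

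The key step is to identify the image of $\phi$, which is the subgroup $\sum_j \lambda_j A$ of $A$. Since each $\lambda_j$ is a multiple of $d(\sigma) = \gcd_j \lambda_j$, we have $\lambda_j A \subseteq d(\sigma) A$, giving one inclusion; conversely, writing $d(\sigma)$ as a $\ZZ$-linear combination of the $\lambda_j$ (B\'ezout) shows that $d(\sigma) A \subseteq \sum_j \lambda_j A$. Thus $\operatorname{im} \phi = d(\sigma) A$, and the first isomorphism theorem gives $|\operatorname{im}\phi| = \ell / d_A(\sigma)$, because $d_A(\sigma)$ is by definition the order of the kernel of multiplication by $d(\sigma)$ on $A$.

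Combining these observations, I would conclude
$$ \chi_M(\sigma) = |M^\sigma| = |\ker \phi| = \frac{|A^{m_1(\sigma)}|}{|\operatorname{im}\phi|} = \frac{\ell^{m_1(\sigma)}}{\ell / d_A(\sigma)} = d_A(\sigma)\, \ell^{m_1(\sigma) - 1}, $$
as claimed. The only step requiring genuine care is the computation of $\operatorname{im}\phi$, and even there the work collapses to the elementary fact that the subgroup of $A$ generated by $\lambda_1 A, \ldots, \lambda_{m_1(\sigma)} A$ coincides with $d(\sigma) A$; everything else is routine bookkeeping with fixed points and orders of finite abelian groups.
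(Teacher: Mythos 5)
Your proof is correct and follows essentially the same route as the paper: both recognize $\chi_M(\sigma)$ as the number of fixed points of $\sigma$ in $M$, identify $(A^n)^\sigma$ with $A^{m_1(\sigma)}$ via constancy on cycles, and reduce the count to the kernel of $(a_j)_j \mapsto \sum_j \lambda_j a_j$. The only cosmetic difference lies in the final step: the paper transforms the $1 \times m$ integer matrix $(\lambda_1, \ldots, \lambda_m)$ into the equivalent matrix $(d(\sigma), 0, \ldots, 0)$ by ``standard linear algebra over $\ZZ$,'' whereas you compute the image $d(\sigma)A$ via B\'ezout and divide orders using the first isomorphism theorem --- the same gcd fact in either guise.
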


\begin{proof}

The value of $\chi_M(\sigma)$ is the number of fixed points of $\sigma$ in $M$. Let $\{ \lambda_1, \ldots, \lambda_m \}$ be the partition associated to $\sigma$. Without loss of generality, we may assume that $\sigma$ lies in the image of $S_{\lambda_1} \times \cdots \times S_{\lambda_m}$ in $S_n$ under the standard embedding; then the set of fixed points of $\sigma$ in $A^n$ is
$$ \{ ( \underbrace{a_1, \ldots, a_1}_{\lambda_1}, \ \ldots, \ \underbrace{a_m, \ldots, a_m}_{\lambda_m} ) \mid a_1, \ldots, a_m \in A \}, $$
which is a subgroup isomorphic to $A^m$. The set of fixed points of $\sigma$ in $M$ corresponds to the kernel of the linear map
$$ \lambda: A^m \To A, \quad (a_1, \ldots, a_m) \mapsto \sum_{i=1}^m \lambda_i a_i. $$
By standard linear algebra over $\ZZ$, this map is equivalent to the one defined by the matrix $ (d(\sigma), 0, \ldots, 0), $
hence the conclusion follows. \end{proof}

Note that when $n$ and $\ell$ are coprime, multiplication by $d(\sigma)$ is invertible on $A$ for any $\sigma \in S_n$, since $d(\sigma) \mid n$. Hence in that situation, we have
$$ \chi_M(\sigma) = \ell^{m_1(\sigma) - 1}, $$
establishing half of Theorem \ref{coprime}. In that case, the short exact sequence (\ref{sex}) is also easily seen to be split, so that
$$ \CC[A^n] \iso \CC[M] \otimes \CC[A] \iso \CC[M]^{\ell}. $$

\subsection*{The coprimality condition}

Note that the class function $\theta_{n,\ell}(\sigma) \df \ell^{m_1(\sigma)}$ on $S_n$ always is the character of a representation, namely that of the permutation representation $\CC[A^n]$. Its decomposition into irreducible representations can be described in terms of the Kotska numbers $K_{\lambda \mu}$ as follows. Consider the \emph{shape} of an element $\el \in A^n$ to be the partition of $n$ whose parts count the number of occurrences of the distinct coordinates of $\el$. For $\mu \vdash n$, let $c_\mu(\ell)$ denote the number of orbits for the action of $S_n$ on $A^n$ whose elements have shape $\mu$, i.e.,
$$ c_\mu(\ell) = \frac{\ell (\ell - 1) \cdots (\ell - k + 1)}{\prod_i d_i}, $$
where $k$ is the number of parts of $\mu$ and $d_i$ denotes the number of times that the integer $i$ occurs among them.

\begin{proposition} \label{label_mult}

The multiplicity of the Schur module $V_\lambda$ in $\theta_{n,\ell}$ is given by
$$ \langle V_\lambda, \theta_{n,\ell} \rangle = \sum_{\mu \vdash n} c_\mu(\ell) \, K_{\lambda \mu}, $$
where $K_{\lambda \mu}$ is the number of semistandard Young tableaux of shape $\mu$ and content~$\lambda$.

\end{proposition}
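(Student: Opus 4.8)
The plan is to recognize $\theta_{n,\ell}$ as a genuine permutation character, decompose the underlying $S_n$-set into orbits, identify each orbit as a coset space of a Young subgroup, and then apply Young's rule to read off the Kostka numbers. Since only the cardinality $|A| = \ell$ intervenes here, I would regard $A^n$ simply as the set of words of length $n$ over an alphabet of $\ell$ letters, with $S_n$ acting by permuting positions. A word is fixed by $\sigma$ exactly when it is constant on each cycle of $\sigma$, so there are $\ell^{m_1(\sigma)}$ fixed words; this reconfirms that $\theta_{n,\ell}$ is the character of the permutation module $\CC[A^n]$, as already observed in the text preceding the statement.

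First I would sort the words into $S_n$-orbits according to their \emph{shape}: two words are in the same orbit if and only if the multiset of multiplicities of their distinct letters agrees, and this multiset is precisely a partition $\mu \vdash n$. By definition $c_\mu(\ell)$ is the number of orbits of shape $\mu$, so at the level of characters we immediately obtain
$$ \theta_{n,\ell} = \sum_{\mu \vdash n} c_\mu(\ell) \, \big[ \CC[\mathcal{O}_\mu] \big], $$
where $\mathcal{O}_\mu$ denotes any single orbit of shape $\mu$.

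Next I would compute the stabilizer of a representative word of shape $\mu$. Grouping positions by the letter they carry partitions $\{1, \ldots, n\}$ into blocks of sizes $\mu_1, \ldots, \mu_k$, and a permutation fixes the word precisely when it preserves each block setwise; hence the stabilizer is (conjugate to) the Young subgroup $S_\mu = S_{\mu_1} \times \cdots \times S_{\mu_k}$. Consequently $\CC[\mathcal{O}_\mu] \iso \mathrm{Ind}_{S_\mu}^{S_n} \triv =: M^\mu$ is the Young permutation module. Invoking Young's rule, which expands $M^\mu = \bigoplus_\lambda K_{\lambda\mu} V_\lambda$ with $K_{\lambda\mu}$ the Kostka number appearing in the statement, and extracting the multiplicity of the Schur module $V_\lambda$ gives
$$ (V_\lambda, \theta_{n,\ell}) = \sum_{\mu \vdash n} c_\mu(\ell) \, K_{\lambda\mu}, $$
as claimed.

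Every ingredient here is standard, so I expect no deep obstacle; the delicate points are purely bookkeeping. I would need to verify that $c_\mu(\ell)$ as defined genuinely counts the shape-$\mu$ orbits, i.e. that choosing an ordered tuple of distinct letters and then identifying assignments that differ by permuting letters of equal multiplicity reproduces the stated count. The one place demanding real care is aligning the shape/content convention in $K_{\lambda\mu}$ with the Young subgroup $S_\mu$ driving the induction, so that the Kostka numbers enter Young's rule with the correct pair of indices and the multiplicity of $V_\lambda$ comes out exactly in the form displayed above.
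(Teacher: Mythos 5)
Your proposal is correct and follows essentially the same route as the paper: decompose $\CC[A^n]$ into orbits indexed by shape, identify each orbit with a coset space $S_n/S_\mu$ of a Young subgroup via the stabilizer computation, and apply Young's rule to extract the Kostka multiplicities. Your closing remark about the shape/content convention is well taken --- the paper's statement describes $K_{\lambda\mu}$ as counting tableaux of shape $\mu$ and content $\lambda$, which is the reverse of the standard convention under which Young's rule reads $M^\mu = \bigoplus_\lambda K_{\lambda\mu} V_\lambda$ --- but this is a labeling slip in the statement rather than a substantive difference in the argument.
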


\begin{proof}

The stabilizer in $S_n$ of an element $\el \in A^n$ of shape $\mu$ is conjugated to the Young subgroup
$$ S_\mu \iso S_{\mu_1} \times \cdots \times S_{\mu_k} \incl S_n. $$
Decomposing $A^n$ into orbits for the action of the symmetric group, we thus find
$$ \CC[A^n] \iso \bigoplus_{\mu \vdash n} \CC[S_n/S_\mu]^{c_\mu(\ell)}. $$
The result then follows using Young's rule \cite[Cor. 4.39]{Fulton-Harris}. \end{proof}

For example, the multiplicity of the alternating representation can be found by putting $\lambda = \{1,1,\ldots,1\}$ in Proposition \ref{label_mult}:
$$ \langle \sg, \theta_{n,\ell} \rangle = \langle V_\lambda, \theta_{n,\ell} \rangle = c_\lambda(\ell) = \binom{\ell}{n}. $$
We will also need an explicit formula for the multiplicity of the trivial character.

\begin{lemma}

The multiplicity of the trivial representation in $\theta_{n,\ell}$ is
\begin{equation} \label{mult} \langle \triv, \theta_{n,\ell} \rangle = \sum_{\mu \vdash n} c_\mu(\ell) = \binom{n + \ell - 1}{n}. \end{equation}

\end{lemma}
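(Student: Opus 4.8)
The plan is to identify $(\triv, \theta_{n,\ell})$ with a count of orbits and then count those orbits as multisets. First I would invoke the standard fact that for any permutation representation $\CC[S]$ of a finite group $G$ on a finite set $S$, the multiplicity of the trivial character equals the number of $G$-orbits on $S$: the trivial isotypic component is spanned by the orbit sums, or equivalently one computes $(\triv, \CC[S]) = \frac{1}{|G|} \sum_g |S^g|$ and recognizes Burnside's lemma. Applying this to $G = S_n$ acting on $S = A^n$ by permutation of coordinates shows that $(\triv, \theta_{n,\ell})$ is exactly the number of $S_n$-orbits on $A^n$.

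The first equality in the statement is then immediate from the very definition of the numbers $c_\mu(\ell)$. Every tuple $\el \in A^n$ has a well-defined shape $\mu \vdash n$, this shape is constant along each $S_n$-orbit, and $c_\mu(\ell)$ was defined to be the number of orbits all of whose elements have shape $\mu$. Summing over all partitions $\mu \vdash n$ therefore recovers the total number of orbits, giving $(\triv, \theta_{n,\ell}) = \sum_{\mu \vdash n} c_\mu(\ell)$.

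For the second equality I would argue that two tuples lie in the same $S_n$-orbit if and only if they have the same multiset of coordinate values; hence orbits are in bijection with multisets of size $n$ drawn from the $\ell$-element set $A$. Equivalently, an orbit is determined by the function $A \to \ZZ_{\geq 0}$ recording how many coordinates take each value, subject to the constraint that these values sum to $n$. The number of such functions is the classical stars-and-bars count $\binom{n + \ell - 1}{n}$, which completes the proof.

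I do not expect a genuine obstacle here; the only point requiring care is the verification that an $S_n$-orbit on $A^n$ is determined exactly by its multiset of coordinates, which is what legitimizes both the shape-by-shape decomposition and the final multiset count. A pleasant feature of this conceptual route is that it never touches the product formula defining $c_\mu(\ell)$: it reduces the claimed identity $\sum_{\mu \vdash n} c_\mu(\ell) = \binom{n+\ell-1}{n}$ to the evident statement that summing the number of multisets of each fixed shape yields the number of all multisets of size $n$ on $\ell$ symbols.
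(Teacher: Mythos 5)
Your proposal is correct and follows essentially the same route as the paper: identify $(\triv, \theta_{n,\ell})$ with the number of $S_n$-orbits on $A^n$ and count these orbits as multisets of size $n$ from an $\ell$-element set via stars and bars, the middle equality being immediate from the definition of $c_\mu(\ell)$ as the number of orbits of shape $\mu$. Your write-up is slightly more explicit than the paper's (which leaves the shape-by-shape decomposition implicit), but there is no substantive difference.
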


\begin{proof} Indeed, the left-hand side is the dimension of the space of invariants in $\CC[A^n]$, so this multiplicity is the number of orbits for the action of $S_n$ on $A^n$. The orbits correspond to unordered lists $a_1, \ldots, a_n$ of elements of $A$, with repetition allowed, and the number of these is the number of ways to put $n$ identical balls (the coefficients $a_i$) into $\ell$ boxes (the elements of $A$), which is given by the right-hand side. \end{proof}

We may now prove the second half of Theorem \ref{coprime}.

\begin{proposition}

If $\widetilde{\theta}_{n,\ell} = \ell^{-1} \theta_{n,\ell}$ is a character of $S_n$, then $(n,\ell) = 1$.

\end{proposition}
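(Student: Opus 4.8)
The plan is to prove the contrapositive: assuming some prime $p$ divides both $n$ and $\ell$, I will exhibit a single irreducible $V_\lambda$ whose multiplicity in $\theta_{n,\ell}$ is not divisible by $\ell$. The right criterion to set up first is that, since $\theta_{n,\ell}$ is a genuine character, every $(V_\lambda,\theta_{n,\ell})$ is a nonnegative integer and $\langle\widetilde\theta_{n,\ell},V_\lambda\rangle=\ell^{-1}(V_\lambda,\theta_{n,\ell})$ is automatically nonnegative. Hence $\widetilde\theta_{n,\ell}$ is a genuine character exactly when $\ell\mid(V_\lambda,\theta_{n,\ell})$ for every $\lambda$, and it suffices to produce one $\lambda\vdash n$ with $p^{v}\nmid(V_\lambda,\theta_{n,\ell})$, where $v=v_p(\ell)$ denotes the $p$-adic valuation of $\ell$.

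The next step is to turn Proposition \ref{label_mult} into a closed form. Summing the decomposition $\CC[A^n]\iso\bigoplus_\mu\CC[S_n/S_\mu]^{c_\mu(\ell)}$ over all $n$ identifies $\sum_\mu c_\mu(\ell)h_\mu$ with the degree-$n$ part of $\prod_i(1-x_i)^{-\ell}$, and the Cauchy identity rewrites this as $\sum_\lambda s_\lambda(1^\ell)\,s_\lambda$. Comparing with $\sum_\mu c_\mu(\ell)K_{\lambda\mu}$ I would conclude $(V_\lambda,\theta_{n,\ell})=s_\lambda(1^\ell)=\dim_{\GL_\ell}V_\lambda$, the number of semistandard tableaux of shape $\lambda$ with entries in $\{1,\dots,\ell\}$. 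The Weyl dimension formula then gives the convenient expression
$$ (V_\lambda,\theta_{n,\ell})=\frac{\prod_{1\le i<j\le\ell}(l_i-l_j)}{1!\,2!\cdots(\ell-1)!},\qquad l_i=\lambda_i+\ell-i, $$
so that partitions $\lambda\vdash n$ with at most $\ell$ parts correspond bijectively to sets $\{l_1>\cdots>l_\ell\ge0\}$ of distinct nonnegative integers with $\sum_i l_i=n+\binom{\ell}{2}$.

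The problem is now purely $p$-adic. Since $v_p(l_i-l_j)$ counts the levels $k$ at which $l_i\equiv l_j\pmod{p^k}$, one has
$$ v_p\Big(\prod_{i<j}(l_i-l_j)\Big)=\sum_{k\ge1}\sum_{r\bmod p^k}\binom{N_{k,r}}{2}, $$
where $N_{k,r}$ is the number of $l_i$ in the residue class $r$ modulo $p^k$; the denominator $1!\cdots(\ell-1)!$ realizes the minimum of this quantity, attained by any block of $\ell$ consecutive integers. When $\ell\mid n$ the block $\{n/\ell,\dots,n/\ell+\ell-1\}$ meets the sum constraint and yields the rectangle $\lambda=((n/\ell)^\ell)$ with $V_\lambda\iso(\det)^{\otimes n/\ell}$, so $(V_\lambda,\theta_{n,\ell})=1$ and we are done at once. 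In general I would choose the $l_i$ balanced modulo every power of $p$ except at the smallest level $k$ where $\sum_i l_i\equiv n+\binom{\ell}{2}\pmod{p^k}$ is incompatible with perfect balance, an incompatibility forced exactly when $v_p(n)<v_p(\ell)$.

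The main obstacle is precisely this construction together with its valuation estimate: showing that the minimal value of $v_p\big(\prod_{i<j}(l_i-l_j)\big)$ over admissible beta-sets exceeds $v_p(1!\cdots(\ell-1)!)$ by exactly $v_p(\ell)-v_p(\gcd(n,\ell))$, which is strictly less than $v_p(\ell)$ whenever $p\mid\gcd(n,\ell)$. Equivalently one is claiming $\gcd_{\lambda\vdash n}\dim_{\GL_\ell}V_\lambda=\ell/\gcd(n,\ell)$, and it is this strict inequality that forbids $\widetilde\theta_{n,\ell}$ from being a character; everything else is the level-by-level bookkeeping of residue collisions.
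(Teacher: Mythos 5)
Your reduction is correct and genuinely different from the paper's: via the Frobenius characteristic and the Cauchy identity (equivalently, Schur--Weyl duality applied to $\CC[A^n] \iso (\CC^\ell)^{\otimes n}$) one does get $(V_\lambda, \theta_{n,\ell}) = s_\lambda(1^\ell)$, which packages Proposition \ref{label_mult} into a clean closed form, and your rectangle observation $\lambda = ((n/\ell)^\ell)$, $s_\lambda(1^\ell) = 1$, settles the case $\ell \mid n$ outright. But there is a genuine gap exactly where you flag ``the main obstacle'': for a prime $p \mid (n,\ell)$ with $\ell \nmid n$, the existence of a beta-set $\{l_1 > \cdots > l_\ell \geq 0\}$ with $\sum_i l_i = n + \binom{\ell}{2}$ and excess $p$-valuation strictly less than $v_p(\ell)$ is asserted, not proved. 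This is not residue-collision bookkeeping: it is the entire content of the proposition in your formulation (your stronger claim $\gcd_{\lambda \vdash n} s_\lambda(1^\ell) = \ell/\gcd(n,\ell)$ is a sharpening of the theorem itself), and the ``balanced at every level except the forced one'' construction must simultaneously handle distinctness of the $l_i$, the exact --- not merely congruential --- sum constraint, and a verification that the imbalance contributes only at the level you claim. As written, the proof ends by restating what is to be shown.

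It is worth seeing how the paper sidesteps this extremal problem, because the fix can be grafted onto your setup in two lines. Since characters form a ring, if $\widetilde{\theta}_{n,\ell}$ is a character then so is $\widetilde{\theta}_{n,\ell}^{\ \alpha} = \widetilde{\theta}_{n,\ell^\alpha}$ for every $\alpha \geq 1$; hence it suffices to exhibit, for a single large $\alpha$, one multiplicity in $\theta_{n,\ell^\alpha}$ not divisible by $\ell^\alpha$, and the paper takes the trivial one, $(\triv, \theta_{n,\ell^\alpha}) = \binom{n + \ell^\alpha - 1}{n}$ --- in your notation, $s_{(n)}(1^{\ell^\alpha})$. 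Writing $\binom{n+L-1}{n} = L f(L)/n!$ with $f(L) = \prod_{i=1}^{n-1}(L+i)$, for $p \mid (n,\ell)$ and $\alpha$ large one has $f(\ell^\alpha) \equiv (n-1)! \not\equiv 0 \bmod p^{v_p(n!)}$ (because $v_p((n-1)!) < v_p(n!)$ precisely when $p \mid n$), so $v_p\big(\binom{n+\ell^\alpha-1}{n}\big) = \alpha v_p(\ell) - v_p(n) < \alpha v_p(\ell)$. In effect, the freedom to replace $\ell$ by $\ell^\alpha$ substitutes for your freedom to range over all $\lambda \vdash n$, and a one-line congruence replaces the beta-set construction. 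So either carry out your construction in full (the needed direction is only the upper bound on the minimal valuation, but even that requires real work), or keep your Schur-function framework and finish by specializing to $\lambda = (n)$ together with the power trick.
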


\begin{proof}

Since characters form a ring, if $\widetilde{\theta}_{n,\ell}$ is a character then $\widetilde{\theta}_{n,\ell^\alpha} = (\widetilde{\theta}_{n,\ell})^\alpha$ also is one for every $\alpha \geq 1$. Equivalently, the multiplicity of every irreducible representation in $\theta_{n,\ell^\alpha}$ is divisible by $\ell^\alpha$ for every $\alpha \geq 1$, in particular that of the trivial representation. The conclusion now follows using the following lemma and the multiplicity formula (\ref{mult}). \end{proof}

\begin{lemma}

If $(n,\ell) \neq 1$, there exists a positive integer $\alpha$ such that
$$ \binom{n + \ell^\alpha - 1}{n} \not\equiv 0 \mod{\ell^\alpha}. $$

\end{lemma}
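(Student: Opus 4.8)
The plan is to fix a prime $p$ dividing $\gcd(n,\ell)$ and to argue entirely $p$-adically. Writing $v_p$ for the $p$-adic valuation, the point is that $\ell^\alpha \mid \binom{n+\ell^\alpha-1}{n}$ would force $p^{\alpha v_p(\ell)} \mid \binom{n+\ell^\alpha-1}{n}$, so it suffices to exhibit a single $\alpha$ for which
$$ v_p\!\left( \binom{n+\ell^\alpha-1}{n} \right) < \alpha\, v_p(\ell). $$
In other words I only need one bad prime, and I will take $p$ to be a prime that $n$ and $\ell$ share — this is exactly where the hypothesis $(n,\ell)\neq 1$ enters.

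Next I would rewrite the binomial coefficient as a product of $n$ consecutive integers,
$$ \binom{n+\ell^\alpha-1}{n} = \frac{1}{n!}\prod_{j=0}^{n-1}\bigl(\ell^\alpha + j\bigr), $$
and compute the valuation of the numerator factor by factor. The $j=0$ factor contributes $v_p(\ell^\alpha)=\alpha\,v_p(\ell)$, while for $1\le j\le n-1$ one has $v_p(\ell^\alpha+j)=v_p(j)$ provided $\alpha\,v_p(\ell)>v_p(j)$. Since $v_p(j)\le \log_p(n-1)$ for every such $j$, this holds simultaneously for all of them as soon as $\alpha$ is large enough that $p^{\alpha v_p(\ell)}>n-1$; I would simply fix one such $\alpha$. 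The numerator valuation then collapses to $\alpha\,v_p(\ell)+v_p((n-1)!)$.

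Subtracting $v_p(n!)$ and invoking the elementary identity $v_p(n!)-v_p((n-1)!)=v_p(n)$ yields
$$ v_p\!\left( \binom{n+\ell^\alpha-1}{n} \right) = \alpha\,v_p(\ell) - v_p(n). $$
Because $p\mid n$ we have $v_p(n)\ge 1$, so the right-hand side is strictly smaller than $\alpha\,v_p(\ell)$, which is exactly the inequality required; hence $\ell^\alpha$ cannot divide the binomial coefficient. The one step that needs care — and the only place where a careless choice of $\alpha$ could derail the argument — is the claim that $v_p(\ell^\alpha+j)=v_p(j)$: this can fail if $\alpha\,v_p(\ell)$ merely equals $v_p(j)$, since an extra carry would then raise the valuation. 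Taking $\alpha$ large enough for $v_p(\ell^\alpha)$ to strictly dominate every $v_p(j)$ with $1\le j\le n-1$ is precisely what rules this out, so I expect this bookkeeping to be the main (and only real) obstacle.
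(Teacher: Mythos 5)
Your proof is correct and takes essentially the same route as the paper: both fix a prime $p \mid (n,\ell)$, choose $\alpha$ large enough that $\ell^\alpha$ dominates the shifts $j$ $p$-adically, and exploit the gap $v_p((n-1)!) = v_p(n!) - v_p(n) < v_p(n!)$ coming from $p \mid n$. The paper packages this as the congruence $f(\ell^\alpha) \equiv f(0) = (n-1)! \pmod{p^{v_p(n!)}}$ for $f(L) = \prod_{i=1}^{n-1}(L+i)$, whereas you compute the exact valuation $v_p\bigl(\binom{n+\ell^\alpha-1}{n}\bigr) = \alpha\,v_p(\ell) - v_p(n)$ factor by factor via the ultrametric inequality --- the same mechanism, with your version yielding slightly sharper information.
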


\begin{proof}

For fixed $n$, consider the polynomial $f(L) = \prod_{i=1}^{n-1} (L + i)$, so that
$$ \binom{n + L - 1}{n} = \frac{f(L) \cdot L}{n!} \equiv 0 \mod{L} \iff f(L) \equiv 0 \mod{n!}. $$
Suppose $p$ is a prime number dividing $(n,\ell)$. The $p$-adic valuation of $n!$ being fixed, for $\alpha$ sufficiently large we have
$ \ell^\alpha \equiv 0 \mod p^{v_p(n!)}, $
so that
$$ f(\ell^\alpha) \equiv f(0) \equiv (n-1)! \not\equiv 0 \mod p^{v_p(n!)}, $$
and the conclusion follows. \end{proof}

\end{document}